\newtheorem{thm}{Theorem}[section]
\newtheorem{lem}[thm]{Lemma}
\newtheorem*{mainthm}{Dichotomy Theorem}
\theoremstyle{definition}
\newtheorem{defn}[thm]{Definition}
\newtheorem*{nota}{Notation}
\numberwithin{equation}{section}
\DeclareMathOperator{\ran}{ran}
\DeclareMathOperator{\Stab}{Stab}
\renewcommand{\phi}{\varphi}
\providecommand{\Id}{\text{Id}}
\providecommand{\restriction}{\uprightharpoon}
\providecommand{\Esetn}{{E^{n}_{set}}}
\providecommand{\Ece}{{=^{ce}}}
\providecommand{\EZce}{{E_0^{\text{ce}}}}
\providecommand{\eiti}{enumerable in the indices\xspace}
\providecommand{\ceoer}{c.e.\ orbit equivalence relation\xspace}
\providecommand{\cbers}{{cbers}\xspace}
\providecommand{\cber}{{cber}\xspace}
\title[Countable Borel equivalence relations in the setting of computable reducibility]{Analogues of the countable Borel equivalence relations in the setting of computable reducibility}
\author[U.~Andrews]{Uri Andrews}
\address{Department of Mathematics\\
University of Wisconsin\\
USA}
\email{{andrews@math.wisc.edu}}
\author[L.~San Mauro]{Luca San Mauro}
\address{Department of Philosophy\\University of
Bari\\
Italy}
\email{\href{mailto:luca.sanmauro@gmail.com}{luca.sanmauro@gmail.com}}
\begin{document}

\date{}

\maketitle

\begin{abstract}
	Coskey, Hamkins, and Miller \cite{CHM} proposed two possible analogues of the class of countable Borel equivalence relations in the setting of computable reducibility of equivalence relations on the computably enumerable (c.e.) sets. The first is based on effectivizing the Lusin--Novikov theorem while the latter is based on effectivizing the Feldman--Moore theorem. They asked for an analysis of which degrees under computable reducibility are attained under each of these notions.
	
	We investigate these two notions, in particular showing that the latter notion has a strict dichotomy theorem: Every such equivalence relation is either equivalent to the relation of equality ($=^{ce}$) or almost equality ($E_0^{ce}$) between c.e.\ sets. For the former notion, we show that this is not true, but rather there are both chains and antichains of such equivalence relations on c.e.\ sets which are between $\Ece$ and $E_0^{ce}$.
	This gives several strong answers to \cite[Question 3.5]{CHM} showing that in general there is no analogue of the Glimm-Efros dichotomy for equivalence relations on the c.e.\ sets. 
		%On the other hand, for a natural class of equivalence relations, such a dichotomy does hold.
\end{abstract}

\section{Introduction}
Invariant descriptive set theory \cite{gao} studies the complexity of equivalence relations up to Borel reducibility. Such a theory serves as a theoretical framework for investigating the complexity of classification problems naturally arising in mathematics.  A fundamental subclass of Borel equivalence relations is that of
\emph{countable Borel equivalence relations} (\emph{cbers}), i.e., those  whose equivalence classes are countable. By the  Feldman--Moore theorem \cite{feldman1977ergodic}, it turns out that \cbers are exactly the  orbit equivalence relations generated by Borel actions of countable groups and this brings into the subject deep
connections with group theory, ergodic theory, and operator algebras (see, e.g., \cite{jackson2002countable,kechris2019theory}). The Feldman--Moore theorem is a straightforward consequence of a classic uniformization result, due to Lusin and Novikov, which ensures that all \cbers  admit a
uniform Borel enumeration of each class.

Paradigmatic examples of \cbers are the identity relation on a given standard Borel space $X$, denoted $\Id(X)$, and the eventual equality relation on the Cantor space $2^\omega$, denoted $E_0$.
%which identifies two infinite binary sequences if they are bitwise eventually the same.
It follows from Silver's dichotomy \cite{silver1980counting} that if $X$ is uncountable, then $\Id(X)$ is Borel reducible to any \cber on $X$. Moreover, the Glimm-Effros dichotomy \cite{harrington1990glimm} states that $E_0$ is a successor of $\Id(2^\omega)$ in the Borel hierarchy, i.e., every Borel equivalence relation is either  reducible to $\Id(2^\omega)$ or $E_0$  reduces to it. Beyond $E_0$, the Borel hierarchy of cbers is much wilder: e.g., Loveau and Velickovic \cite{louveau1994note} proved that it contains both infinite chains and antichains. 
%$\langle \mathcal{P}(\omega), \subseteq^*\rangle$ ordered by almost inclusion embeds into it. 
Yet, there exists a universal \cber $E_\infty$ to which all  \cbers  reduce \cite{dougherty1994structure}.
%one of its realization is the orbit relation generated by the shift action of  the free group with $2$ generators $\mathbb{F}_2$ on $2^{\mathbb{F}_2}$.

Coskey, Hamkins, and Miller \cite{CHM} suggested to effectivize set theoretic Borel equivalence relations by restricting the focus to their computably enumerable (c.e.) instances. By identifying c.e.\ sets with their indices, this restriction allows to project equivalence relations from $2^\omega$ to $\omega$. 
% indeed, if $E$ is a Borel equivalence relation on $2^\omega$ (and we identify sets with their characteristic functions), then $E^{ce}$ is an equivalence relation on $\omega$ given by
%\[
%e \, E^{ce} \, i \Leftrightarrow W_e \, E \, W_i,
%\] 
%where $(W_e)$ denotes, as usual, a uniform enumeration of all c.e.\ sets. 
So, e.g., $\Id(2^\omega)$ and $E_0^{ce}$ translate, respectively, to the equality of  c.e.\ sets, denoted by $=^{ce}$, and to the almost equality of c.e.\ sets, denoted by $E_0^{ce}$. 

%Then, one may study  various $E^{ce}$'s by means of computable reducibility, the most popular tool for classifying countable equivalence relations. 
In \cite{CHM}, two effective   analogues of the class of \cbers are proposed. Roughly (formal definitions will be given below): the \emph{c.e.\ orbit equivalence relations}, which are based on effectivizing the Feldman--Moore theorem, are those  arising from  a computable group acting, in a suitable way, on the c.e.\ sets; the \emph{equivalence relations \eiti}, on the other hand,  are based on effectivizing the Lusin--Novikov theorem.
%, are the quotients of $=^{ce}$ that have a uniformly computable enumeration of each class.

Coskey, Hamkins, and Miller \cite{CHM} proved  that, contrary to the Borel case, the two notions do not align. They showed that the equivalence relation $E^{ce}_0$ is \eiti but no suitable action on the c.e.\ sets realizes it. They asked whether $E^{ce}_0$ is computably bireducible with a c.e.\ orbit equivalence relation. 
More generally, they asked for a degree theoretic analysis of these notions under computable reducibility, the most popular tool for classifying equivalence relations on $\omega$.
% (see, e.g., \cite{?}). 
In this paper, we offer such analysis.
Our main theorem expresses a sharp and quite unexpected dichotomy: 

\begin{mainthm}
	Every c.e.\ orbit equivalence relation is computably bireducible with exactly one of $=^{ce}$ or $E_0^{ce}$.
	%
	%Up to computable reducibility, every c.e.\ orbit equivalence relation is either equivalent to $=^{ce}$ or $E_0^{ce}$.
\end{mainthm}

Hence, c.e.\ orbit equivalence relations are much more well-behaved than their Borel counterpart. This  is in sharp contrast with the evidence that many desirable properties of a poset fail for degree structures based on  computable reducibility (such as \textbf{Ceers} \cite{GaoGerdes,JoinsAndMeets,TheoryOfCeers} and \textbf{ER} \cite{andrews2021structure}). On the other hand, the property of being \eiti gives rise to a more complicated hierarchy: Theorems \ref{infinite chains of eqrels enumerable in indices} and \ref{infinite antichain of eqrels enumerable in indices} state that, between $=^{ce}$ and $E_0^{ce}$, there are both  infinite chains and infinite antichains of equivalence relations which
are enumerable in the indices. It follows that there is no analogue of the Glimm-Effros dichotomy for equivalence relations on the c.e.\ sets, which gives a strong solution to \cite[Question 3.5]{CHM}.

In Section 1.1, we recall some definitions for working with group actions and prove some easy and useful Lemmas about group actions. We also give the definition of when a group action on the c.e.\ sets is computable in indices, and define when an equivalence relation on the c.e.\ sets is a \ceoer or is \eiti.
In Section 2, we show that any action on $\mathbf{CE}$ which is computable in indices is induced by a computable permutation group acting on $\omega$. Using this, in Section 3 we prove the dichotomy theorem that every c.e.\ orbit equivalence relation is equivalent to either $\Ece$ or $E_0^{ce}$. To do this, we show that every action comes in one of three types and we prove the result for each of these types in subsections 3.1-3.3. Finally, in Section 4 we consider the equivalence relations \eiti and show that there are infinite chains and antichains of these between $\Ece$ and $E_0^{ce}$.

\subsection{Preliminaries} We assume that the reader is familiar with the fundamental
notions and techniques of computability theory. In particular, we shall freely use the standard machinery for  priority arguments, 
(e.g., strategies, requirements, outcomes, injury, tree of strategies), 
as is surveyed in \cite{Soare}.
% In particular, for nodes $\alpha$ and $\beta$ on a tree of strategies, the notation $\alpha <_L \beta$ means that $\alpha$ is to the left of $\beta$.
 
 \subsubsection*{Group Actions}
Let $G$ be a group acting on some set $X$. Let $\pi:G\rightarrow S_X$ be the induced permutation representation.
We say \emph{$G$ has only finitely many actions} if $\ran(\pi)$ is finite.
For any $Y\subseteq X$, we let $\Stab(Y)=\{g\in G : \forall x\in Y (g\cdot x=x)\}$. We say 
%\emph{$G$ has isolated actions} 
\emph{the actions of $G$ are isolated}
if there is a finite set $F\subseteq X$ so that $\Stab(F)= \ker(\pi)$. 
Otherwise, we say \emph{the actions of $G$ are non-isolated}.
The \emph{orbit equivalence relation} $E_G$ on $X$ is given by
$
x \, E_G \, y \Leftrightarrow (\exists g \in G)(g \cdot x =y).
$
Equivalence classes of $E_G$ are called \emph{$G$-orbits}. For a set $Y\subseteq X$, we let $G\cdot Y = \{g\cdot x : g\in G,\, x\in Y\}$. Similarly, if $Y\subseteq X$ and $g\in G$, we let $g\cdot Y = \{g\cdot x : x\in Y\}$. If $Y$ is contained in a single $G$-orbit, we say that $G$ acts \emph{transitively} on $Y$.

The next few easy group theoretic lemmas will facilitate our classification of the c.e.\ orbit equivalence relations.

\begin{lem}\label{equivalence of non-isolation notions}
	If the actions of $G$ are non-isolated, then for any $g\in G$ and $F$ a finite subset of $\omega$, there is $h\in G$ so that $h\restriction F = g\restriction F$ and $h\circ g^{-1}\notin \ker(\pi)$.
\end{lem}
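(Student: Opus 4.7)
The plan is to exploit non-isolation directly at the finite set $F' := g\cdot F \subseteq \omega$, which is finite since $F$ is. By the definition of non-isolation, for \emph{every} finite subset of $X=\omega$ the stabilizer strictly contains $\ker(\pi)$, so we may pick some $k\in \Stab(F')\setminus \ker(\pi)$. The candidate witness is then $h := k\circ g$.

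Two small verifications then finish the argument. First, for any $x\in F$, since $g(x)\in g\cdot F = F'$ and $k$ fixes every point of $F'$, we get $h(x)=k(g(x))=g(x)$, so $h\restriction F = g\restriction F$. Second, $h\circ g^{-1} = k\circ g\circ g^{-1}=k$, and $k\notin \ker(\pi)$ by choice, so $h\circ g^{-1}\notin \ker(\pi)$.

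There is really no obstacle here: the whole content of the lemma is the observation that the obvious reformulation of non-isolation, stated pointwise in $\omega$, can be transported along $g$ by replacing $F$ with its image $g\cdot F$. The only thing to be slightly careful about is the direction of composition (working with $k\circ g$ rather than $g\circ k$ so that the fixed set $F'$ appears on the correct side), and noting that $\ker(\pi)\subseteq \Stab(F')$ always, so that ``$\Stab(F')\neq \ker(\pi)$'' really does furnish an element of $\Stab(F')\setminus \ker(\pi)$.
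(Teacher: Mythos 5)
Your proof is correct and is essentially the same argument as the paper's, which also takes an element of a stabilizer outside $\ker(\pi)$ and precomposes it with $g$. You are in fact slightly more careful than the printed version: with the convention $(f\circ g)(x)=f(g(x))$, one does need to take the non-kernel element from $\Stab(g\cdot F)$ rather than $\Stab(F)$, exactly as you do.
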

\begin{proof}
	Let $h_0\in \Stab(F)\smallsetminus \ker(\pi)$ and let $h=h_0\circ g$.
\end{proof}

\begin{lem}\label{trichotomy}
If the actions of $G$ are isolated and all $G$-orbits are finite, then $G$ has only finitely many actions.
\end{lem}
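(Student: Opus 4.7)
My plan is to leverage the witnessing finite set $F$ with $\Stab(F)=\ker(\pi)$ and enlarge it to a finite $G$-invariant set, then argue that the action on this invariant set already separates all elements of $G/\ker(\pi)$.

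First, I would consider the orbit $G\cdot F = \bigcup_{x\in F} G\cdot x$. Since $F$ is finite and every $G$-orbit is finite by hypothesis, $G\cdot F$ is a finite subset of $X$. By construction $G\cdot F$ is closed under the action of $G$, so restricting the action gives a homomorphism $\rho\colon G\to S_{G\cdot F}$, and the image lies in the finite group $S_{G\cdot F}$.

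Next I would identify the kernel of $\rho$ with $\ker(\pi)$. One inclusion is immediate: if $g\in\ker(\pi)$ then $g$ fixes every element of $X$ and in particular of $G\cdot F$, so $g\in\ker(\rho)$. For the reverse inclusion, observe that $F\subseteq G\cdot F$ (take the identity of $G$), so $\ker(\rho)=\Stab(G\cdot F)\subseteq \Stab(F)=\ker(\pi)$, the last equality being the isolation hypothesis. Thus $\ker(\rho)=\ker(\pi)$.

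Finally, I would conclude by noting that $\rho$ factors through an injection $G/\ker(\pi)\hookrightarrow S_{G\cdot F}$. Since $|G\cdot F|<\infty$, the target is finite, so $G/\ker(\pi)$ is finite. But $\ran(\pi)\cong G/\ker(\pi)$, so $\ran(\pi)$ is finite, which by definition means $G$ has only finitely many actions. I do not anticipate a serious obstacle here; the only point that requires care is verifying that $\Stab(G\cdot F)$ coincides with $\ker(\pi)$, which follows directly from the two containments just described.
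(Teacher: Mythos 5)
Your proof is correct and follows essentially the same route as the paper's: both use the isolating finite set $F$ with $\Stab(F)=\ker(\pi)$, the finiteness of the orbits of elements of $F$, and the fact that agreement on $F$ forces equality of actions. You package this as a homomorphism into the finite symmetric group $S_{G\cdot F}$, whereas the paper directly counts the finitely many possible restrictions $g\restriction F$, but the underlying argument is the same.
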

\begin{proof}
Let $F$ be finite so $\Stab(F) = \ker(\pi)$. Any $f,g\in G$ that act the same on $F$ have $f^{-1}g\in \ker(\pi)$, so $\pi(f)=\pi(g)$. But since each $x\in F$ has a finite $G$-orbit, there are only finitely many total possible images for $F$ for any action in $G$.
\end{proof}

\begin{lem}\label{lemma: avoiding F}
	Suppose $G$ acts on some infinite set $S$ transitively. Then, for any finite set $F\subseteq S$, there is  $g\in G$ so that $g\cdot F$ is disjoint from $F$.\end{lem}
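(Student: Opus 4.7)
The plan is to reformulate the disjointness condition algebraically and then apply B.H. Neumann's covering lemma. Write $F = \{x_1, \ldots, x_n\}$. The condition $g \cdot F \cap F = \emptyset$ amounts to requiring $g \cdot x_i \ne x_j$ for every pair of indices $i, j \in \{1, \ldots, n\}$. For each such pair I would introduce the set $C_{i,j} = \{g \in G : g \cdot x_i = x_j\}$, which, since $G$ acts transitively (so all $x_i, x_j$ lie in a single $G$-orbit), is nonempty and is a left coset of $\Stab(x_i)$.

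Thus the set of ``bad'' $g$'s is exactly $\bigcup_{i,j} C_{i,j}$, a union of at most $n^2$ cosets of the stabilizers $\Stab(x_1), \ldots, \Stab(x_n)$. The key observation is that each $\Stab(x_i)$ has infinite index in $G$: by orbit-stabilizer, $[G : \Stab(x_i)] = |G \cdot x_i| = |S| = \infty$, since the action is transitive on the infinite set $S$. At this point I would invoke B.H. Neumann's lemma: a group cannot be written as a finite union of cosets of subgroups of infinite index. This yields $\bigcup_{i,j} C_{i,j} \subsetneq G$, and any $g$ in the complement witnesses the lemma.

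The main obstacle, if one wanted a self-contained proof avoiding Neumann's lemma, would be producing the required $g$ directly. A naive induction on $|F|$ falters because the stabilizers $\Stab(x_i)$ differ across different $i$, so modifying a witness that works for $F \smallsetminus \{x_n\}$ in order to also push $x_n$ off $F$ need not preserve the disjointness already achieved for the remaining points. Neumann's lemma is precisely the combinatorial fact that resolves this tension, which is why it appears to be the cleanest route.
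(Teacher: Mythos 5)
Your proof is correct and is essentially the same as the paper's: both identify the bad $g$'s as a finite union of cosets of stabilizers, note those stabilizers have infinite index by transitivity on an infinite set, and conclude via B.H.\ Neumann's lemma that such a union cannot cover $G$.
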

\begin{proof}
	For each pair $x,y\in F$, the set of $g\in G$ so that $g\cdot x = y$ is a coset of $\Stab(x)$. Then, since $S$ is infinite and $G$ acts transitively, the $\Stab(x)$ must have infinite index. By Neumann's lemma \cite{neumann1954groups}, a group cannot be covered by a finite union of cosets of subgroups of infinite index.\footnote[3]{We thank Meng-Che Ho for pointing out this slick proof.}
\end{proof}

%
%
% naturally give rise to equivalence relations: the \emph{orbit equivalence relation} $E_G$, generated by a group $G$ acting on a domain $X$, is given by
%\[
%x \, E_G \, y \Leftrightarrow (\exists \gamma \in G)(\gamma \cdot x =y).
%\]

\subsubsection*{Computable Reducibility}

%More generally, we focus on equivalence relations that can be represented on $\omega$. 
For equivalence relations $E$ and $F$ on $\omega$,  $E$ is  \emph{computably reducible} to $F$, written  $E\leq_c F$,  if there is a computable function $f$ so that
$
x \, E \, y \Leftrightarrow f(x) \, F \, f(y).
$
Henceforth, we refer to computable reductions as just reductions. We write $E\, \equiv_c F$, if $E$ and $F$ reduce to each other. 
%The notation $f: E\leq_c F$ will mean that $f$ is a reduction of $E$ to $F$.

\begin{defn}[\cite{CHM}]\label{def:core defns}  Let $(W_e)_{e\in\omega}$ be a uniform enumeration of all c.e.\ sets, and denote by $\mathbf{CE}$  the collection of c.e.\ subsets of $\omega$.
\begin{itemize}
\item If $E$ is an equivalence relation on $2^\omega$, then $E^{ce}$ is an equivalence relation on $\omega$ given by
\[
e \, E^{ce} \, i \Leftrightarrow W_e \, E \, W_i.
\] 
 Note that every $E^{ce}$ is a \emph{quotient} of $=^{ce}$, i.e., $=^{ce}\, \subseteq \, E^{ce}$. 
\item An action of a computable group $G$ on $\mathbf{CE}$ (note that the action is on the collection of sets, not on indices for these sets) is
\emph{computable in indices} if there is computable $\alpha: G \times \omega \rightarrow \omega$ so that
\[
W_{\alpha(\gamma,e)}=\gamma \cdot W_{e}.
\]
We use the term \emph{c.e.\ orbit equivalence relation} and the notation $E^{ce}_G$ to mean an orbit equivalence relation of a group action on \textbf{CE} which  is computable in indices.
\item $E^{ce}$ is \emph{\eiti} if there is a computable $\alpha : \omega\times \omega \to \omega$ so that
\[
i \, E^{ce} \, j \Leftrightarrow (\exists n)(W_{\alpha(i,n)}=W_j).
\]
\end{itemize} 
\end{defn}

It is easy to see that $=^{ce}$ reduces to $E_0^{ce}$. To see that $E_0^{ce}$ does not reduce to $\Ece$, it suffices to observe that $=^{ce}$ is $\Pi^0_2$ while $E_0^{ce}$ is strictly $\Sigma^0_3$ (see \cite[Theorem 3.4]{CHM}). In fact, the following holds: 

\begin{thm}[Ianovski, Miller, Ng, Nies \cite{IanovskiMillerNgNies}]\label{e0 is sigma3 complete}
$E_0^{ce}$ is a universal $\Sigma^0_3$ equivalence relation under computable reducibility.
\end{thm}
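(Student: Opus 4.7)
Since the paper has just noted that $E_0^{ce}$ itself is $\Sigma^0_3$ (via the citation to \cite{CHM} immediately before the statement), the content of the theorem is universality: every $\Sigma^0_3$ equivalence relation $E$ on $\omega$ computably reduces to $E_0^{ce}$. My plan is to put $E$ into a convenient normal form and then construct the reduction by a priority argument.

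For the normal form, I would use the $\Sigma^0_3$-completeness of the cofinite c.e.\ index set to produce a computable $g$ with
\[
x E y \iff \exists k \, \bigl( W_{g(x,y,k)} = \omega \bigr),
\]
and then symmetrize by replacing $g(x,y,k)$ with an index for $W_{g(x,y,k)} \cap W_{g(y,x,k)}$, so that the witness data does not depend on the ordering of the pair.

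The reduction $f$ would be built stagewise via a tree-of-strategies. For each pair $(y,k)$ a strategy acts under the guess that $k$ is the least witness for $xEy$: on the ``yes'' outcome it drives a canonical, coordinated sub-enumeration into $W_{f(x)}$ and $W_{f(y)}$ that keeps their symmetric difference bounded on a designated block, and on the ``no'' outcome it pumps infinite discrepancies into that block. Strategies for a fixed pair $(x,y)$ are prioritized by $k$, so that on the true path the strategy for the least true witness eventually dominates and shuts down lower-$k$ contributions. The $\Pi^0_2$ outcomes of ``$W_{g(x,y,k)} = \omega$'' are managed by a standard $0'''$-level tree that guarantees only finitely many injuries per requirement on the true path.

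The main obstacle is that each $W_{f(x)}$ is a single c.e.\ set which must simultaneously deliver finite symmetric difference against every $y \in [x]_E$, even though the least witness for $xEy$ can vary with $y$ and strategies corresponding to non-equivalent pairs also contribute enumerations. The remedy is to exploit the symmetrized $g$ together with the transitivity of $E$ so that successful-strategy injections on the true path propagate through chains of equivalences, and to arrange the priority structure so that contributions from failing strategies cancel in the relevant symmetric differences. The finite-injury bookkeeping on the true path is the technical heart of the argument; once it is in place, verifying that $f$ is a reduction --- by tracking the eventually-successful strategy on the positive side, and the eventual refutation of every $k$ on the negative side --- is routine.
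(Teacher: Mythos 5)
The paper does not prove this result---it is cited from Ianovski, Miller, Ng, and Nies \cite{IanovskiMillerNgNies}---so there is no internal proof against which to compare. On its own merits, your sketch has the right overall shape (a $\Sigma^0_3$ normal form with a $\Pi^0_2$ matrix realized via totality of c.e.\ sets, and a tree-of-strategies reduction guessing the least witness for each pair), but it contains one small slip and one substantive gap. The slip: symmetrizing by passing to $W_{g(x,y,k)} \cap W_{g(y,x,k)}$ does not work, since the witness for $xEy$ and the witness for $yEx$ need not coincide; you must index over pairs, e.g.\ use $W_{g(x,y,k_1)} \cap W_{g(y,x,k_2)}$ with $k = \langle k_1, k_2\rangle$.

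The substantive gap is precisely where you appeal to ``cancellation.'' With designated per-pair blocks, a negative $(x,b)$-strategy pumping an infinite discrepancy into its block $B_{x,b}$ necessarily inflates $W_{f(x)} \cap B_{x,b}$ or $W_{f(b)} \cap B_{x,b}$; but for any third index $y$ with $x E y$, the set $W_{f(y)}$ is empty on $B_{x,b}$ under your scheme, so $W_{f(x)} \triangle W_{f(y)}$ is already infinite there. Nothing cancels. If you instead let $(x,y)$-strategies mirror content across all blocks in order to chase the equivalence class, you lose the locality that made the diagonalization manageable and must coordinate enumeration across the whole class, which you only know up to a $\Sigma^0_3$ approximation and which changes as the tree's guesses change. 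That coordination---electing and re-electing a representative per putative class, or some equivalent device, and proving the re-elections contribute only finite error on the true path---is the actual content of the theorem; the $0'''$ tree bookkeeping is the scaffolding around it, not the hard part. Citing ``transitivity of $E$'' and ``the priority structure'' names the obstacle rather than removing it, so as written the proposal does not yet constitute a proof.
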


Note that both the c.e.\ orbit equivalence relations and the equivalence relations \eiti are subclasses of $\Sigma^0_3$ equivalence relations. Thus, they all  reduce to $E^{ce}_0$. Finally, c.e.\ equivalence relations, widely investigated in literature (see, e.g., \cite{GaoGerdes,JoinsAndMeets,TheoryOfCeers,andrews2022investigating}), are called \emph{ceers}.

\section{Reducing to computable permutation groups}
We begin by proving a simple yet fundamental lemma that describes how the Recursion Theorem constrains the behavior of group actions which are computable in indices. From this lemma it will follow that, without loss of generality, we may assume that any \ceoer is naturally induced by a computable permutation group on $\omega$ 
%(i.e., a computable subgroup of $S_\infty$).
(i.e., a computable group acting computably by permutations on $\omega$)

\begin{nota}
Throughout this section, we let $\alpha$ be a computable function witnessing that a given computable group $G$ acts computably in indices on $\mathbf{CE}$ (see the second bullet of Definition \ref{def:core defns}).
\end{nota}

\begin{lem}\label{lem: constraints to orbit eqrels computable in the indices}
	For  each $\gamma\in G$ and c.e.\ sets $U,V$
	\begin{enumerate}
		\item $U \subseteq V \Rightarrow \gamma \cdot U \subseteq \gamma \cdot V$;
		\item %$\alpha$ is cardinality invariant, i.e.,
		 $|V| =|\gamma \cdot V|$.
	\end{enumerate}
\end{lem}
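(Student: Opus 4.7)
The plan is to prove (1) via the Recursion Theorem and then leverage monotonicity to obtain (2) by observing that each $\gamma \in G$ induces an order-automorphism of $(\mathbf{CE}, \subseteq)$.

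For (1), fix $\gamma \in G$ and c.e.\ sets $U \subseteq V$, and suppose toward contradiction that there is $x \in (\gamma \cdot U) \smallsetminus (\gamma \cdot V)$. Using the $s$-$m$-$n$ and Recursion Theorems, I will produce an index $e$ whose c.e.\ set $W_e$ is built self-referentially as follows: always enumerate $U$ into $W_e$, and simultaneously monitor $W_{\alpha(\gamma,e)}$ for the appearance of $x$; if $x$ ever enters $W_{\alpha(\gamma,e)}$, additionally enumerate every element of $V$ into $W_e$. Whether or not $x$ appears, the enumeration is computable, so $W_e$ is c.e. If $x$ never enters $W_{\alpha(\gamma,e)}$, then $W_e = U$, so $W_{\alpha(\gamma,e)} = \gamma \cdot U \ni x$, contradicting $x \notin W_{\alpha(\gamma,e)}$. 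If $x$ does enter, then $W_e = V$, so $W_{\alpha(\gamma,e)} = \gamma \cdot V$, yet by hypothesis $x \notin \gamma \cdot V$; again a contradiction.

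For (2), since $G$ is a group, the map $V \mapsto \gamma \cdot V$ is a bijection on $\mathbf{CE}$ with inverse $V \mapsto \gamma^{-1} \cdot V$. By (1) both directions are monotone, so $\gamma$ acts as an order-automorphism of $(\mathbf{CE}, \subseteq)$. In particular $\gamma \cdot \emptyset = \emptyset$, as $\emptyset$ is the unique minimum. Moreover, if $V = V' \cup \{x\}$ with $x \notin V'$, then no c.e.\ set lies strictly between $V'$ and $V$; were $|\gamma \cdot V \smallsetminus \gamma \cdot V'| \geq 2$, we could pick $y \in \gamma \cdot V \smallsetminus \gamma \cdot V'$ and form the c.e.\ set $\gamma \cdot V' \cup \{y\}$ lying strictly between $\gamma \cdot V'$ and $\gamma \cdot V$, violating the order isomorphism. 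Hence the covering relation is preserved, and by induction on $n$, $|V|=n$ implies $|\gamma \cdot V|=n$. For infinite $V$, $V$ contains finite subsets of every cardinality, whose $\gamma$-images lie in $\gamma \cdot V$ by (1) and retain those cardinalities by the finite case, giving $|\gamma \cdot V| = \infty = |V|$.

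The main obstacle is the recursion-theoretic construction in (1): the self-referential definition of $W_e$ must be sequenced so that both possible outcomes---$x$ eventually appearing in $W_{\alpha(\gamma,e)}$ or not---yield a direct contradiction via the defining identity $W_{\alpha(\gamma,e)} = \gamma \cdot W_e$. Once (1) is in hand, part (2) amounts to order-theoretic bookkeeping on the lattice of c.e.\ sets.
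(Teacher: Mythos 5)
Your proof of part (1) is correct and is essentially the paper's argument: fix a supposed counterexample $x\in(\gamma\cdot U)\smallsetminus(\gamma\cdot V)$, build $W_e$ self-referentially via the Recursion Theorem to copy $U$ unless $x$ appears in $W_{\alpha(\gamma,e)}$, in which case switch to copying $V$, and derive a contradiction from $W_{\alpha(\gamma,e)}=\gamma\cdot W_e$ in both cases. (The paper phrases it directly, showing $n\in\gamma\cdot V$ for arbitrary $n\in\gamma\cdot U$, rather than by contradiction, but these are the same idea.)

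For part (2) you take a genuinely different, though equally valid, route. The paper's argument is a quick counting/pigeonhole observation: since $U\mapsto\gamma\cdot U$ is injective and monotone, the subsets of $V$ embed into the subsets of $\gamma\cdot V$, so if $|V|>|\gamma\cdot V|$ there simply is not enough room; the reverse inequality follows by applying $\gamma^{-1}$. You instead observe that $\gamma$ induces an order-automorphism of $(\mathbf{CE},\subseteq)$, hence preserves the minimum $\emptyset$ and the covering relation, and then you propagate cardinality by induction on finite sets and by density of finite subsets for infinite sets. Both arguments rest on (1) together with invertibility of the action; yours is a bit more verbose but spells out the order-theoretic bookkeeping explicitly, while the paper's is a one-line cardinality observation. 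Either is acceptable.
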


\begin{proof}
	$(1)$ Suppose that $U \subseteq V$ and take any $n\in \gamma \cdot U$. Let $e$ be an index we control by the Recursion Theorem\footnote{Formally speaking, we describe a construction of a c.e.\ set $W_{f(e)}$ from a given index $e$ and the recursion theorem gives us an index so $W_{f(e)}=W_e$.}. We copy $U$ into $W_e$, unless we see $n$ enter in $W_{\alpha(\gamma,e)}$ in which case we copy $V$ in $W_e$. We must have $n\in W_{\alpha(\gamma,e)}$ as otherwise $W_e=U$ and $n\in \gamma\cdot U = W_{\alpha(\gamma,e)}$. Thus $n\in W_{\alpha(\gamma,e)}$ and $W_e=V$. This shows $n\in W_{\alpha(\gamma,e)} = \gamma \cdot V$. 
	
	$(2)$ Suppose that $|V|> |\gamma \cdot V|$. Since the action of $\gamma$ must be injective on $\mathbf{CE}$ and $(1)$ holds, there is simply not enough room to accommodate all subsets of $V$ into the subsets of $\gamma \cdot V$. To exclude that $|V|< |\gamma \cdot V|$, just note that $V = \gamma^{-1}\cdot (\gamma \cdot V)$. 
	%The reasoning for complements is symmetric (I guess!).
\end{proof}

\begin{defn}
For each $\gamma\in G$, let the function $F_\gamma:\omega \to \omega$ be given by 
\[ 
F_\gamma(n)= m \Leftrightarrow \gamma\cdot \{n\}=\{m\}.
\] 
\end{defn}

%\begin{defn}
%	Let $\alpha$ be a computable function witnessing that some $G$ acts computably in the indices on $\mathbf{CE}$. Then, for each $\gamma\in G$, we define a function on $\omega$ as follows $F(g,n)= m$ if whenever $W_i = \{n\}$, we have $W_{\alpha(g,i)} = \{m\}$. 
%\end{defn}

\smallskip

\begin{lem}\label{ce orbit eqrels are permutation induced}
	For all $\gamma\in G$, $F_\gamma$ is a computable permutation of $\omega$. Moreover, $\gamma \cdot V = \{F_\gamma(n) : n\in V\}$ for each c.e.\ $V$.
\end{lem}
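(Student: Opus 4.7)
The plan is to lean on Lemma~\ref{lem: constraints to orbit eqrels computable in the indices} twice, once for $\gamma$ and once for $\gamma^{-1}$, with essentially no additional recursion-theoretic work. First I would check that $F_\gamma$ is well-defined: Lemma~\ref{lem: constraints to orbit eqrels computable in the indices}(2) gives $|\gamma \cdot \{n\}| = |\{n\}| = 1$, so $\gamma \cdot \{n\}$ really is a singleton and $F_\gamma(n)$ makes sense. Computability is immediate from the hypothesis that the action is computable in indices: given $n$, uniformly produce an index $e_n$ for $\{n\}$, compute $\alpha(\gamma, e_n)$, which is an index for the singleton $\gamma \cdot \{n\}$, and enumerate $W_{\alpha(\gamma, e_n)}$ until an element appears.

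Next, I would show that $F_\gamma$ is a bijection by exhibiting $F_{\gamma^{-1}}$ as a two-sided inverse. If $F_\gamma(n) = m$, then $\gamma \cdot \{n\} = \{m\}$, and applying $\gamma^{-1}$ (using that we have a genuine group action) gives $\{n\} = \gamma^{-1} \cdot \{m\}$, so $F_{\gamma^{-1}}(m) = n$. By symmetry, $F_\gamma \circ F_{\gamma^{-1}} = \Id$ as well.

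For the moreover clause $\gamma \cdot V = \{F_\gamma(n) : n \in V\}$, I would apply Lemma~\ref{lem: constraints to orbit eqrels computable in the indices}(1) in each direction. For $\supseteq$: for every $n \in V$, $\{n\} \subseteq V$ forces $\{F_\gamma(n)\} = \gamma \cdot \{n\} \subseteq \gamma \cdot V$. For $\subseteq$: given $m \in \gamma \cdot V$, the inclusion $\{m\} \subseteq \gamma \cdot V$ combined with Lemma~\ref{lem: constraints to orbit eqrels computable in the indices}(1) applied to $\gamma^{-1}$ yields $\gamma^{-1} \cdot \{m\} \subseteq \gamma^{-1} \cdot (\gamma \cdot V) = V$, where the last equality uses that $G$ acts as a group. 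Setting $n = F_{\gamma^{-1}}(m)$ then gives $n \in V$ with $F_\gamma(n) = m$.

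I do not expect any serious obstacle here: the previous lemma already encodes the crucial monotonicity and cardinality-preservation of the action, and once those are in hand everything reduces to checking that singletons go to singletons and that the action commutes with the obvious decomposition of $V$ into its points. The recursion-theoretic content is entirely packaged inside Lemma~\ref{lem: constraints to orbit eqrels computable in the indices}; this lemma is purely a cleanup.
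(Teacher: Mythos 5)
Your proof is correct and follows essentially the same route as the paper: computability comes from waiting for the unique element of $W_{\alpha(\gamma,e_n)}$ to appear, bijectivity comes from $F_{\gamma^{-1}}$ being an inverse, and the moreover clause comes from applying Lemma~\ref{lem: constraints to orbit eqrels computable in the indices}(1) once to $\gamma$ and once to $\gamma^{-1}$. You are slightly more careful than the paper in explicitly invoking Lemma~\ref{lem: constraints to orbit eqrels computable in the indices}(2) to justify that $\gamma\cdot\{n\}$ is a singleton (hence that $F_\gamma$ is well-defined), a small point the paper leaves implicit.
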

\begin{proof}
	
	We first observe that $F_\gamma$ is a permutation of $\omega$. Since $\gamma$ acts injectively on $\mathbf{CE}$, $F_\gamma$ must be injective. But since $F_\gamma$ is necessarily the inverse of $F_{\gamma^{-1}}$, we see it is also surjective. This permutation of $\omega$ is computable since we can just wait to see which number enters $W_{\alpha(\gamma,i)}$ where $i$ is any index such that $W_i = \{n\}$.
	
	Next, by Lemma \ref{lem: constraints to orbit eqrels computable in the indices}(1), $\gamma \cdot V \supseteq \{F_\gamma(n) : n\in V\}$.
	Applying the same to $\gamma^{-1}$, we see that $\gamma \cdot V = \{F_\gamma(n) : n\in V\}$.
\end{proof}

This allows us consider the c.e.\ orbit equivalence relations in a more concrete fashion:

\begin{defn}
	For $G$ a computable subgroup of $S_\infty$, let 
	\[
	i \mathrel{R^{ce}_G} j \Leftrightarrow (\exists \gamma \in G)(W_i = \{ \gamma(x) : x\in W_j\}).
	\]
\end{defn}

The next lemma, which follows directly from Lemma \ref{ce orbit eqrels are permutation induced}, ensures that focusing only to c.e.\ orbit equivalence relations of the form $R^{ce}_G$ is not restrictive:
\begin{lem}\label{normal form c.e. orbit eqrels}
	For every \ceoer $E^{ce}_G$, there is a computable subgroup $H$ of $S_\infty$  so that $E^{ce}_G=R^{ce}_H$.
\end{lem}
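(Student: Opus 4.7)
The plan is to use Lemma \ref{ce orbit eqrels are permutation induced} to replace the abstract action of $G$ on $\mathbf{CE}$ by the concrete action of a computable subgroup of $S_\infty$ on $\omega$. Concretely, I would set $H := \{F_\gamma : \gamma \in G\}$ and show that $H$ is a computable subgroup of $S_\infty$ with $E^{ce}_G = R^{ce}_H$.

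First, I would check that $H$ is a subgroup of $S_\infty$. Each $F_\gamma$ is a permutation of $\omega$ by Lemma \ref{ce orbit eqrels are permutation induced}. The map $\gamma \mapsto F_\gamma$ is a group homomorphism: applying the defining equation $\gamma \cdot \{n\} = \{F_\gamma(n)\}$ to $\gamma = \gamma_1 \gamma_2$ and using that $\cdot$ is a group action gives $F_{\gamma_1\gamma_2} = F_{\gamma_1}\circ F_{\gamma_2}$, and similarly $F_{\gamma^{-1}} = F_\gamma^{-1}$. So $H$ is closed under composition and inverses.

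Next, I would verify that $H$ is computable. Since $G$ is a computable group, it has a computable enumeration and computable group operations; and by the proof of Lemma \ref{ce orbit eqrels are permutation induced}, an index for the computable permutation $F_\gamma$ can be obtained uniformly from $\gamma$ (by waiting, for any $i$ with $W_i = \{n\}$, for the unique element to enter $W_{\alpha(\gamma,i)}$). Thus we have a uniformly computable family of indices for the elements of $H$, and the group operations on $H$ are inherited from those on $G$ via the homomorphism $\gamma \mapsto F_\gamma$, so $H$ is a computable subgroup of $S_\infty$ (indexed by $G$, with possible repetition, which is harmless for the definition of $R^{ce}_H$).

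Finally, I would check $E^{ce}_G = R^{ce}_H$. For c.e.\ sets $W_i, W_j$, Lemma \ref{ce orbit eqrels are permutation induced} gives $\gamma \cdot W_j = \{F_\gamma(n) : n\in W_j\}$ for every $\gamma \in G$. Hence $i \, E^{ce}_G \, j$ iff $W_i = \gamma\cdot W_j$ for some $\gamma \in G$, iff $W_i = \{F_\gamma(n) : n \in W_j\}$ for some $F_\gamma \in H$, which is exactly $i \, R^{ce}_H \, j$. The only point requiring any care is the notion of ``computable subgroup of $S_\infty$,'' i.e., ensuring that indexing $H$ by elements of $G$ (with possible repetitions coming from $\ker(\gamma\mapsto F_\gamma)$) satisfies whatever computability requirements are in force; but since both the enumeration and the group operations transfer through the computable homomorphism, this is routine.
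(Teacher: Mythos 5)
Your proposal is correct and follows exactly the route the paper has in mind: the paper states the lemma as following directly from Lemma \ref{ce orbit eqrels are permutation induced}, and your write-up (setting $H = \{F_\gamma : \gamma \in G\}$, checking $\gamma\mapsto F_\gamma$ is a computable homomorphism into $S_\infty$, and transferring the equivalence via $\gamma\cdot V = \{F_\gamma(n):n\in V\}$) is just a careful filling-in of that ``directly.'' Your closing remark about indexing $H$ by $G$ with possible repetitions from $\ker(\gamma\mapsto F_\gamma)$ is exactly the right thing to notice, since that kernel need not be decidable.
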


%Instead of abstractly considering a \ceoer of a group acting on $\mathbf{CE}$, from now on, we will fix a computable subgroup $H$ of $S_\infty$ and let
%\[
% i \mathrel{R^{ce}_H} j \Leftrightarrow (\exists \gamma \in H)(W_i = \{ \gamma(x) : x\in W_j\}).
% \]
%Lemma \ref{ce orbit eqrels are permutation induced} ensures that every $E^{ce}_G$ is equal to $R^{ce}_H$ for some computable permutation group $H$.

%\begin{defn}
%	For $G$ a computable permutation group on $\omega$, we let $E_G$ be the equivalence relation defined by $i \mathrel{E_G} j$ if and only if there is some $g\in G$ so that $W_i = g[W_j]$. That is $W_i = \{ g(x)\mid x\in W_j\}$. 
%\end{defn}

\section{The dichotomy theorem for c.e.\ orbit equivalence relations}

This section is devoted to the proof of the dichotomy theorem: We show that every $R^{ce}_G$ is either $\Sigma^0_3$-universal (and thus, by Theorem \ref{e0 is sigma3 complete}, equivalent to $E_0^{ce}$) or is equivalent to $\Ece$. First, we note that every $R^{ce}_G$ lies above $\Ece$.

\begin{thm}\label{Always above Ece}\label{G finite}
	There is a reduction $f$ of  $\,\Ece$ to itself so that, if $W_i\neq W_j$, then $W_{f(i)}$ is not computably isomorphic to $W_{f(j)}$. In particular, 
%for any computable group $G$ of computable permutations, 
	we have that $f$  reduces $\Ece$ to any $ R^{ce}_G$.
\end{thm}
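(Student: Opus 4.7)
The plan is to realize $f$ by encoding each c.e.\ set $W_i$ as the effective direct sum of a pre-built uniformly c.e.\ family $\{B_n\}_{n \in \omega}$ that is highly independent under $1$-reducibility. Because computable isomorphism of c.e.\ sets coincides with $1$-equivalence (Myhill's isomorphism theorem), it suffices to ensure that distinct $W_i$'s yield direct sums lying in distinct $1$-degrees.

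The crucial preliminary step is to construct a uniformly c.e.\ family $\{B_n\}_{n\in\omega}$ with the following independence property: for every $n$ and every c.e.\ set $S\subseteq\omega$ with $n\notin S$,
\[
B_n \not\leq_1 \bigoplus_{k\in S} B_k.
\]
The requirements, indexed by triples $(n,e,s)$, stipulate that if $n\notin W_s$, then $\varphi_e$ fails to be a $1$-reduction of $B_n$ to $\bigoplus_{k\in W_s} B_k$. These are met by a Friedberg--Muchnik-style priority argument.

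With $\{B_n\}$ at hand, define $f$ to be the computable function mapping $i$ to an index of
\[
W_{f(i)} \;:=\; \bigoplus_{n\in W_i} B_n \;=\; \{\langle n,m\rangle : n\in W_i \text{ and } m\in B_n\}.
\]
Since $W_{f(i)}$ depends only on $W_i$ as a set, $W_i = W_j$ forces $W_{f(i)} = W_{f(j)}$, so $f$ is already a reduction of $\Ece$ to itself. For the additional property, suppose $W_i \neq W_j$ and pick $n \in W_i \triangle W_j$, say $n \in W_i \setminus W_j$. The map $m \mapsto \langle n, m\rangle$ witnesses $B_n \leq_1 W_{f(i)}$, whereas the independence of $\{B_n\}$ applied with $S = W_j$ yields $B_n \not\leq_1 W_{f(j)}$. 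Hence $W_{f(i)} \not\equiv_1 W_{f(j)}$, and by Myhill they are not computably isomorphic. The ``in particular'' clause is immediate from Lemma~\ref{ce orbit eqrels are permutation induced}: every $R^{ce}_G$-class lies inside a computable-isomorphism class, so separation under computable isomorphism forces separation under $R^{ce}_G$.

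The main obstacle is the construction of $\{B_n\}$ with the strong independence required, since the joins range over arbitrary (possibly infinite) c.e.\ index sets. The argument must diagonalize against all pairs $(\varphi_e, W_s)$ while respecting the side condition ``$n \notin W_s$'', but no fundamentally new machinery beyond the standard priority techniques surveyed in \cite{Soare} is needed.
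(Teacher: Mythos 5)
Your reduction to a pre-built ``$1$-independent'' family $\{B_n\}$ is a genuinely different decomposition from the paper's, and the logic from the independence property to the theorem is sound: the encoding $W_{f(i)}=\bigoplus_{n\in W_i}B_n$ depends only on $W_i$ as a set (so $W_i=W_j$ gives $W_{f(i)}=W_{f(j)}$ outright, eliminating the paper's need to approximate the $\Pi^0_2$ condition via the $\mathcal{R}_{i,j}$ tree machinery), the $1$-reducibility argument via Myhill is correct, and the ``in particular'' clause is correctly deduced from Lemma~\ref{ce orbit eqrels are permutation induced}.

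However, the entire argument rests on the unproven lemma that such a family $\{B_n\}$ exists, and the claim that this is a routine Friedberg--Muchnik-style finite-injury construction is not warranted. Fix a requirement: $\varphi_e$ should fail to be a $1$-reduction of $B_n$ to $\bigoplus_{k\in W_s}B_k$, where $n\notin W_s$. A naive strategy picks a fresh witness $x$, waits for $\varphi_e(x)\downarrow=\langle k,y\rangle$, and then either puts $x$ into $B_n$ while restraining $y$ out of $B_k$, or keeps $x$ out of $B_n$ while exploiting $y\in B_k$ and $k\in W_s$. The first move fails if $y$ has already been enumerated into $B_k$ at convergence time --- some lower-priority requirement may have deposited its own witness $y$ during the wait, and this cannot be undone. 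The second move fails if $k\notin W_s$ at that stage, and there is no way to decide whether $k$ will ever enter $W_s$. Guessing that $k$ stays out and putting $x$ into $B_n$ gives no finite bound on restarts: $W_s$ may be an infinite c.e.\ set, each new element entering it can spoil the current witness, and if the strategy restarts forever then $\varphi_e$ remains entirely consistent with being a $1$-reduction, so the requirement is never met. This is not a $\Sigma^0_1/\Pi^0_1$ guessing situation that a finite-outcome tree disposes of; the interaction between the witness, the lower-priority deposits into the $B_k$, and the enumeration of $W_s$ is exactly the coordination problem that the paper's direct construction handles with its $(i,j)$-versus-$(j,i)$-stage bookkeeping and the chain argument of Lemma~\ref{AwkwardChainingWin}. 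To make your factorization work you would need to supply an explicit --- and likely non-standard --- construction of $\{B_n\}$ with the stated independence (or show that the encoding can be modified to sidestep the ``$y$ already in $B_k$'' case), rather than appeal to standard priority techniques.
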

\begin{proof}
	We will construct a sequence of sets $(V_k)_{k\in\omega}$ so that $W_i = W_j$ implies $V_i=V_j$ and $W_i\neq W_j$ implies $V_i$ and $V_j$ are not computably isomorphic. To do so, we shall satisfy the following requirements:
\begin{itemize}
\item[$\mathcal{R}_{i,j}$]: If $W_i = W_j$, make $V_i = V_j$;
\item[$\mathcal{D}^n_{i,j}$]: Make $V_j\neq \phi_n(V_i)$.
\end{itemize}
	
The $\mathcal{R}_{i,j}$-strategies have two outcomes: $\infty<f$. Similarly, $\mathcal{D}^n_{i,j}$-strategies have  outcomes: $d<w$. We place these outcomes on a tree of strategies $\mathbf{T}$ meeting the following conditions:
 Every path contains an $\mathcal{R}_{i,j}$-node $\alpha$ before any $\mathcal{D}^n_{i,j}$ strategy. Every path extending $\alpha\smallfrown f$ contains strategies for $\mathcal{D}^n_{i,j}$, for each $n$. No $\mathcal{D}^n_{i,j}$-strategy extends $\alpha\smallfrown \infty$.
 
\subsection*{The strategy to meet $\mathcal{R}$-requirements}	For $\mathcal{R}_{i,j}$-strategies, we use the usual computable approximation to determine if $W_i=W_j$. When the length of agreement of $W_{i,s}$ and $W_{j,s}$ changes, we take outcome $\infty$ and act as follows: we replace $V_{i,s}$ by $V_{i,s}\cup (V_{j,s}\cap [0,\ell])$ and $V_{j,s}$ by $V_{j,s}\cup (V_{i,s}\cap [0,\ell])$, where $\ell$ denotes the length of agreement of $V_{i,s}$ and $V_{j,s}$.
 %\footnote{the maximal value $\leq s$ so that these two sets agree up to this length}, 
On all other stages, let $x$ be the least element such that $W_{i,s}(x)\neq W_{j,s}(x)$. We say this is an \emph{$(i,j)$-stage} if $x\in W_{i,s}\smallsetminus W_{j,s}$ and it is a \emph{$(j,i)$-stage} if $x\in W_{j,s}\smallsetminus W_{i,s}$.

\subsection*{The strategy to meet $\mathcal{D}$-requirements}	A $\mathcal{D}^n_{i,j}$-strategy $\alpha$ acts as follows: 
Assume that this is an $(i,j)$-stage (if it is a $(j,i)$-stage instead, reverse the role of $i$ and $j$ using $\phi_n^{-1}$ instead of $\phi_n$).
 Also, suppose there are $M$ numbers restrained by higher priority $\mathcal{D}$-strategies. 
 
	First, we choose $M+1$ new numbers $K_m^\alpha$ for $m<M+1$ and restrain $K_m^\alpha$ from entering any set $V_k$. We wait for $\phi_n(K_m^\alpha)$ to converge  for each $m$ and take outcome $w$. Once $\phi_n(K_m^\alpha)$ converges for each $m$, we take outcome $d$ and act as follows:
	If there are $m_0<m_1<M+1$ so that $\phi_n(K_{m_0}^\alpha)=\phi_n(K_{m_1}^\alpha)$, then we do nothing since $\phi_n$ is not a computable permutation. Otherwise, we choose one number $\phi_n(K^\alpha)$ which is not restrained by a higher priority $\mathcal{D}$-strategy. We act depending on the value of $\phi_n(K^\alpha)$:
	
	\begin{itemize}
		\item[$(a)$] If $\phi_n(K^\alpha) = K_e^\beta$ for a lower priority $\beta$ (i.e., a strategy $\beta$ being injured by our taking outcome $d$) or if $\phi_n(K^\alpha)$ is not chosen as $K_e^\beta$ for any $\beta$, then we place $\phi_n(K^\alpha)$ into $V_j$.
		\item[$(b)$] If $\phi_n(K^\alpha) = K^\alpha$, then we place $K^\alpha$ into $V_i$ (this is the case where it being an $(i,j)$-stage matters).
	\end{itemize}
	
	\subsection*{The verification.} The verification is based on the following lemmas.
	
	\begin{lem}\label{restraintsaremaintainedhere}
		If $\alpha$ places a restraint against $K^\alpha_m$ entering any set, and $\alpha$ has not acted or been injured, then $K^\alpha_m$ has not entered any set.
				If $\alpha$ acts under case $(a)$ and is never injured, then $K^\alpha_m$ still never enters any set.
	\end{lem}
	\begin{proof}
		Since $K^\alpha_m$ is not in any $V_i$, it cannot enter any $V_i$ via action for an $\mathcal{R}$-strategy. Only a higher priority $\mathcal{D}$-strategy or $\alpha$ itself would put $K^\alpha_m$ into any set. In the former case, $\alpha$ would be injured by this, and in the latter case, $\alpha$ acts. If $\alpha$ acts under case $(a)$, then $\alpha$ also does not put $K^\alpha_m$ into any $V_i$.
	\end{proof}
	
	\begin{lem}\label{AwkwardChainingWin}
		If $\alpha$ puts $K^\alpha_m$ into $V_i$ by case $(b)$, then either $\alpha$ is injured or $K^\alpha_m$ never enters $V_j$ (symmetrical if the stage is a $(j,i)$-stage).
	\end{lem}
	\begin{proof}
		For $K^\alpha_m$ to enter $V_j$, there must be some sequence of sets $V_{k_0},\ldots V_{k_n}$ so that $V_i = V_{k_0}$, $V_j = V_{k_n}$ and at some stage after stage $s$, we must approximate that $W_{k_{m}} = W_{k_{m+1}}$ with lengths of agreement at least $K^\alpha_m$. But consider the length of agreement $\ell$ of $W_{i,s}$ and $W_{j,s}$. We had $\ell\in W_{i,s}\smallsetminus W_{j,s}$ since it was an $(i,j)$-stage. If the length of agreement changed, then $\alpha$ would have been injured. But then we must see $\ell$ enter each successive $W_{k_{m+1}}$ until we see $\ell$ enter $W_{k_n}=W_{j}$, so the length of agreement would change after all.
	\end{proof}
	
	\begin{lem}
		If $\alpha$ is on the true path, then $\alpha$ ensures that its requirement is satisfied.
	\end{lem}
	\begin{proof}
		If $\alpha$ is an $\mathcal{R}$-requirement, it need only succeed if the true outcome is infinite. In this case, as the length of agreement between $W_{i,s} $ and $W_{j,s}$ goes to infinity, we ensure that more and more of $V_i$ and $V_j$ agree on a cofinal set of stages, ensuring that $V_i=V_j$.
		
		Next, suppose that $\alpha$ is a $\mathcal{D}^n_{i,j}$-requirement. We consider a stage $s$ late enough that $\alpha$ is never injured after $s$ and $\alpha$ acts at stage $s$ if it ever will. If the true outcome is $w$, then $\phi_n$ is not a permutation and the requirement is satisfied. If the true outcome is $d$, we must consider the two cases above: In case $(a)$, 
		Lemma \ref{restraintsaremaintainedhere} shows that $\phi(K^\alpha_m)\in V_j\smallsetminus V_i$ and in case $(b)$ Lemma \ref{AwkwardChainingWin} shows that $\phi(K_m^\alpha)\in V_i\smallsetminus V_j$ (symmetrical if it is a $(j,i)$-stage).
	\end{proof}
%	This concludes the proof that $V_i$ (i.e., $W_{f(i)}$) and $V_j$ (i.e., $W_{f(j)}$) are computably isomorphic if and only if $W_i$ and $W_j$ coincide. 
	This concludes the proof that $f$ reduces $=^{ce}$ to any $R^{ce}_G$.
\end{proof}

We just proved that  $=^{ce}$ is the least c.e.\ orbit equivalence relation. To calculate the complexity of all $R^{ce}_G$'s  and obtain the dichotomy result, we shall now separate three cases: $(i)$ $G$ has only finitely many actions; $(ii)$ There is an infinite $G$-orbit; $(iii)$ The actions of $G$ are non-isolated. Lemma \ref{trichotomy} guarantees that there are no other cases to be considered.

%
%Thus we will consider the following 3 cases in the following three sections:
%\begin{itemize}
%	\item $G$ contains only finitely many actions.
%	\item There is an element with an infinite $G$-orbit.
%	\item The actions in $G$ are not isolated.
%\end{itemize}

\subsection{Case $(i)$: $G$ has only finitely many actions} To facilitate our analysis, we introduce the following equivalence relation which turns out to be equivalent to $=^{ce}$. 

\begin{defn}
Let $\Esetn$ be given by $i \mathrel{\Esetn} j$ if and only if $i=\langle i_0,\ldots i_{n-1}\rangle$ and $j= \langle j_0\ldots j_{n-1}\rangle$
	and $\{W_{i_k}: k< n\} = \{W_{j_k}: k< n\}$ where $\langle \cdot , \ldots ,\cdot\rangle$ is an $n$-ary pairing function.
\end{defn}
	%\footnote{Here we are using a standard pairing function giving a bijection between $\omega$ and $\omega^n$}

\begin{lem}\label{FinGBelowEsetn}
	If $G$ has only finitely many actions, then $R^{ce}_G$ reduces to $\Esetn$ for some $n$.
\end{lem}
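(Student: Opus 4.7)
The plan is to exploit the hypothesis to identify $R^{ce}_G$ with the action of a finite group of computable permutations, and then encode each c.e.\ set by the tuple of its images under this finite group.

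By hypothesis $\ran(\pi)$ is finite; say $|\ran(\pi)|=n$. Pick representatives $\gamma_0,\dots,\gamma_{n-1}\in G$ (with $\gamma_0=1_G$) whose images $\sigma_k=\pi(\gamma_k)$ enumerate $\ran(\pi)$. By Lemma \ref{ce orbit eqrels are permutation induced}, each $\sigma_k=F_{\gamma_k}$ is a computable permutation of $\omega$, and since $\{\sigma_0,\dots,\sigma_{n-1}\}=\ran(\pi)$ is a finite subgroup of $S_\infty$, it is closed under composition and inverses. Using the function $\alpha$ witnessing that the action is computable in indices, the map $(k,i)\mapsto \alpha(\gamma_k,i)$ is computable, and $W_{\alpha(\gamma_k,i)}=\sigma_k\cdot W_i$ for each $k<n$ and each index $i$.

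Next I would define the candidate reduction
\[
f(i)\;=\;\langle \alpha(\gamma_0,i),\alpha(\gamma_1,i),\dots,\alpha(\gamma_{n-1},i)\rangle.
\]
So writing $f(i)=\langle i_0,\dots,i_{n-1}\rangle$, we have $\{W_{i_k}:k<n\}=\{\sigma_k\cdot W_i:k<n\}$, which is precisely the $R^{ce}_G$-orbit of $W_i$ (again using that $\{\sigma_k\}$ is a group, so the orbit is realized by this finite set of representatives).

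The verification of $i\mathrel{R^{ce}_G}j\iff f(i)\mathrel{\Esetn}f(j)$ is then immediate. For the forward direction, if $\gamma\cdot W_j=W_i$ then $\sigma_\ell\cdot W_j=W_i$ for some $\ell$, and for each $k$ we have $\sigma_k\cdot W_i=(\sigma_k\sigma_\ell)\cdot W_j=\sigma_{k'}\cdot W_j$ for some $k'<n$; by symmetry (using $\sigma_\ell^{-1}\in\ran(\pi)$) the two $n$-element families $\{\sigma_k\cdot W_i\}$ and $\{\sigma_k\cdot W_j\}$ coincide as sets, so $f(i)\mathrel{\Esetn}f(j)$. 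Conversely, if $f(i)\mathrel{\Esetn}f(j)$ then $W_i=\sigma_0\cdot W_i\in\{\sigma_k\cdot W_j:k<n\}$, which directly gives $i\mathrel{R^{ce}_G}j$.

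The argument is essentially mechanical once the right representatives are fixed; the only conceptual point to check is that we have a \emph{computable} handle on the finite set of representatives $\gamma_0,\dots,\gamma_{n-1}$, but since $n$ and the $\gamma_k$ are fixed constants from the hypothesis, they can simply be hard-coded into the definition of $f$. The main structural observation doing the work is that a finite image $\ran(\pi)$ automatically forms a finite group of computable permutations, which allows us to package an entire $G$-orbit into a single canonical tuple.
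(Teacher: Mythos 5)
Your proof is correct and follows essentially the same route as the paper's: choose representatives $g_0,\dots,g_{n-1}$ of the finitely many distinct actions and send $i$ to an index for the tuple $\langle \alpha(g_0,i),\dots,\alpha(g_{n-1},i)\rangle$. The paper states this map and leaves the verification implicit; you spell out both directions (correctly, using closure of $\ran(\pi)$ under composition and inverses), but there is no substantive difference in approach.
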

\begin{proof}
	Let $g_0,\ldots g_{n-1}$ be group elements representing all distinct actions in $G$.
	We obtain a reduction of $R^{ce}_G$ to $\Esetn$ by the map which sends any c.e.\ set $W_i$ to an index for the family $\{g_0\cdot W_i,\ldots , g_{n-1}\cdot  W_i\}$. 
\end{proof}

We now need to show that $\Esetn$ reduces to  $\Ece$. 

\begin{thm}\label{thm:EsetnReducesToEce}
	For each $n$, $\Esetn$ reduces to $\Ece$.
\end{thm}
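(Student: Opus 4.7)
The plan is to prove this via a finite-injury priority argument on a tree of strategies, following the template of Theorem~\ref{Always above Ece} but with a considerably simpler diagonal requirement. Interpreting each index $e$ as an $n$-tuple $\langle e_0,\ldots,e_{n-1}\rangle$, I would build a uniformly c.e.\ sequence $(V_e)_{e\in\omega}$ satisfying
\begin{itemize}
\item $\mathcal R_{e,e'}$: if $e \mathrel{\Esetn} e'$, then $V_e = V_{e'}$;
\item $\mathcal D_{e,e'}$: if $e \not\mathrel{\Esetn} e'$, then $V_e \neq V_{e'}$;
\end{itemize}
and take the reduction to be $e \mapsto$ (an index for) $V_e$.

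The key observation is that $\Esetn$ is $\Pi^0_2$ with a natural length-of-agreement function: let $\ell_{e,e'}(s)$ be the largest $m$ such that the sets of subsets of $[0,m)$ given by $\{W_{e_k,s}\cap[0,m) : k<n\}$ and $\{W_{e'_k,s}\cap[0,m) : k<n\}$ coincide. A direct check shows $\limsup_s \ell_{e,e'}(s) = \infty$ iff $e \mathrel{\Esetn} e'$: if the two families agree, every initial segment stabilizes and the traces match; and if some $W_{e_k}$ differs from every $W_{e'_{k'}}$, then an initial segment of some length $M$ already witnesses this and caps $\ell_{e,e'}$ below $M$. Accordingly each $\mathcal R_{e,e'}$-strategy has outcomes $\infty<f$, and at expansionary stages (when $\ell_{e,e'}$ exceeds its previous supremum) it performs the same element-by-element sync of $V_e$ with $V_{e'}$ used in Theorem~\ref{Always above Ece}. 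Each $\mathcal D_{e,e'}$-strategy sits below the $f$-outcome of $\mathcal R_{e,e'}$; unlike in Theorem~\ref{Always above Ece} we need only $V_e\neq V_{e'}$ rather than non-isomorphism, so the strategy simply picks a fresh large witness $K$, enumerates it into $V_e$, and restrains $K$ from $V_{e'}$.

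The main obstacle, and essentially the only nontrivial part of the verification, is an analogue of Lemma~\ref{AwkwardChainingWin}: we must rule out that $K$ is carried from $V_e$ to $V_{e'}$ by a chain of syncs $V_e = V_{b_0} \to V_{b_1} \to \cdots \to V_{b_m} = V_{e'}$ performed by other $\mathcal R$-strategies. For such a chain to transport $K$, each link would have to push the $V$-length-of-agreement of $V_{b_j}$ and $V_{b_{j+1}}$ past $K$, which forces the corresponding $\ell_{b_j,b_{j+1}}$ to grow without bound. If every link in the chain has unbounded $\ell$, then by transitivity of $\Esetn$ we would get $e \mathrel{\Esetn} e'$, contradicting the $f$-outcome of $\mathcal R_{e,e'}$ on the true path; if some link is bounded, choosing $K$ above the eventual bounds of all higher-priority $\mathcal R$-strategies (which the tree bookkeeping makes available by the time $\mathcal D_{e,e'}$ is visited for the last time) blocks propagation at that step. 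The remaining bookkeeping of injuries, restraints, and outcomes is of exactly the same character as in the proof of Theorem~\ref{Always above Ece}, and I would not expect any further surprises.
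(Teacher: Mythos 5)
The paper's proof of this theorem is a short, direct, priority-free construction, so your heavy tree-of-strategies machinery is a genuine detour. The paper simply defines, for $i=\langle i_0,\ldots,i_{n-1}\rangle$, the c.e.\ set
\[
V_i=\bigl\{(k,\rho_0,\ldots,\rho_{n-1}) : \text{each }\rho_l\in 2^k\text{ and }(\exists\pi\in S_n)(\forall l<n)\,\rho_l\subseteq W_{i_{\pi(l)}}\bigr\},
\]
the family of all length-$k$ ``traces'' of $\{W_{i_0},\ldots,W_{i_{n-1}}\}$ closed under permuting the coordinates. This is uniformly c.e.\ in $i$ because the defining condition is a positive, monotone condition on the enumerations of the $W_{i_l}$, and the map $i\mapsto$ (index for $V_i$) is a reduction: $i\mathrel{\Esetn} j$ trivially gives $V_i=V_j$, while if $V_i=V_j$ then for each $k$ some $\pi_k\in S_n$ matches the length-$k$ traces, and since $S_n$ is finite, by pigeonhole one $\pi$ works for infinitely many (hence, by monotonicity of initial segments, all) $k$, so $\{W_{i_l}\}=\{W_{j_l}\}$. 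In effect the pigeonhole over $S_n$ plays the role that your entire priority tree was meant to play.

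Beyond being overkill, your sketch has a real gap in exactly the step you flag as ``essentially the only nontrivial part'': your analogue of Lemma~\ref{AwkwardChainingWin}. You argue that a chain of syncs $V_e=V_{b_0}\to\cdots\to V_{b_m}=V_{e'}$ must contain a link with bounded length-of-agreement and that $K$ can be chosen above ``the eventual bounds of all higher-priority $\mathcal R$-strategies.'' But the offending link can easily lie at a node of \emph{lower} priority than your $\mathcal D_{e,e'}$ (to the right of it, or below its $d$-outcome), and such a node's eventual $\ell$-bound is simply not available when $K$ is chosen; reinitialization resets the strategy's expansionary threshold, not the underlying length-of-agreement of the $W$-families, so nothing stops such a link from later syncing past $K$. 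The mechanism that actually makes Lemma~\ref{AwkwardChainingWin} work is different and more delicate: at the moment the $\mathcal D$-strategy acts there is a single concrete witness $\ell\in W_i\smallsetminus W_j$, and c.e.\ monotonicity forces that any chain of $W$-agreements past $K>\ell$ would push $\ell$ into $W_j$, injuring the strategy. You have not produced the analogous single-element witness of ``disagreement'' for $\Esetn$ (where a disagreement between two $n$-families of c.e.\ sets is a more diffuse combinatorial fact than one element in one set and not the other), so your chain-blocking step does not go through as written.
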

\begin{proof}
	Let $h$ be a function which sends $i=\langle i_0,\ldots , i_{n-1}\rangle$ to a c.e.\ index for the set $$V_i=\{(k,\rho_0,\ldots , \rho_{n-1}) : \text{ each } \rho_i \in 2^k\text{ and } (\exists \pi\in S_n)(\forall i<k)(\rho_i\subseteq W_{\sigma(i)})\}.$$
	
	It is immediate that if $i \mathrel{\Esetn} j$ then $V_i = V_j$. On the other hand, if $V_i = V_j$, then for every $k$ there is some $\pi_k\in S_n$ so that $(\forall l<n)(W_{i_l}\restriction k = W_{j_{\pi_k(l)}}\restriction k)$. Hence, by the pigeonhole principle, there is some permutation $\pi \in S_n$ so that $(\forall l<n)(W_{i_l} = W_{j_{\pi(l)}})$.
\end{proof}

Putting together Lemma \ref{FinGBelowEsetn}, Theorem \ref{thm:EsetnReducesToEce}, and Theorem \ref{Always above Ece}, we get that c.e.\ orbit equivalence relations induced by finite permutation groups are as simple as possible:

\begin{thm}
	If $G$ has only finitely many actions, then $R^{ce}_G\equiv_c \Ece$.
\end{thm}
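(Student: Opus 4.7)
The plan is to observe that this theorem is a direct corollary of the three preceding results and requires essentially no new work; the proof amounts to chaining together reductions in both directions. The difficulty of the underlying classification has already been absorbed into the earlier statements, so my task here is simply bookkeeping.

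For the upper bound $R^{ce}_G \leq_c \Ece$, I would first invoke Lemma \ref{FinGBelowEsetn} to pick $n$ with $R^{ce}_G \leq_c \Esetn$, and then compose with the reduction from Theorem \ref{thm:EsetnReducesToEce} to get $R^{ce}_G \leq_c \Ece$. For the lower bound $\Ece \leq_c R^{ce}_G$, I would cite Theorem \ref{Always above Ece}, which already gives this for every c.e.\ orbit equivalence relation, independent of the structure of $G$. Combining the two inequalities yields $R^{ce}_G \equiv_c \Ece$.

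There is no main obstacle here: the honest work was done in proving Lemma \ref{FinGBelowEsetn} (which uses the finiteness of the action range to turn an orbit into a finite set of c.e.\ sets) and in Theorem \ref{thm:EsetnReducesToEce} (which handles equality of finite families of c.e.\ sets via the pigeonhole encoding through finite initial segments), together with the general lower bound in Theorem \ref{Always above Ece}. If I wanted to make the reduction fully explicit without the intermediate $\Esetn$, I could compose the two maps directly: fix representatives $g_0,\dots,g_{n-1}$ of the finitely many actions of $G$, and send an index $i$ to a c.e.\ index for
\[
\{(k,\rho_0,\dots,\rho_{n-1}) : \rho_\ell \in 2^k,\ (\exists \pi \in S_n)(\forall \ell<n)\,\rho_\ell \subseteq g_{\pi(\ell)}\cdot W_i\},
\]
using that each $g_\ell \cdot W_i$ is uniformly c.e.\ in $i$ by the assumption that the action is computable in indices. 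But this is a mere unpacking of the two cited proofs, and for clarity I would prefer the two-line argument that simply cites them.
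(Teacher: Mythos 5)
Your proof is correct and is exactly the paper's argument: the paper states this theorem as an immediate consequence of Lemma \ref{FinGBelowEsetn}, Theorem \ref{thm:EsetnReducesToEce}, and Theorem \ref{Always above Ece}, chained precisely as you do. The explicit unpacking you sketch at the end is a faithful composition of the cited reductions and adds nothing that needs checking.
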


\subsection{Case $(ii)$: There is an infinite $G$-orbit}

\begin{thm}\label{Inf orbit}
	If there is an infinite $G$-orbit, then $R^{ce}_G$ is $\Sigma^0_3$-universal.
	
	%If there is an element $n\in \omega$ so that the $G$-orbit of $n$ is infinite, then $R^{ce}_G$ is $\Sigma^0_3$-universal.
\end{thm}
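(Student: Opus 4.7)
The plan is to reduce $E_0^{ce}$ to $R^{ce}_G$; together with the $\Sigma^0_3$-completeness of $E_0^{ce}$ (Theorem \ref{e0 is sigma3 complete}) and the evident fact that $R^{ce}_G$ is $\Sigma^0_3$, this will establish $\Sigma^0_3$-universality. Let $O$ be an infinite $G$-orbit on which $G$ acts transitively by computable permutations (using Lemma \ref{ce orbit eqrels are permutation induced}). The central geometric tool is Lemma \ref{lemma: avoiding F}, which guarantees that any finite $F \subseteq O$ admits a $g \in G$ with $g\cdot F$ disjoint from $F$; iterating this, we can computably locate fresh points and witness group elements moving prescribed finite configurations off themselves.

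I would construct, for each index $e$ (via the Recursion Theorem), a c.e.\ set $V_e \subseteq O$ by a priority argument on a tree of strategies. For each pair $(e,i)$ the requirements are a positive requirement $\mathcal{P}_{e,i}$, stipulating that if $W_e =^* W_i$ then some $\gamma \in G$ satisfies $\gamma\cdot V_e = V_i$, and, for each $g \in G$, a negative requirement $\mathcal{N}_{e,i,g}$, stipulating that if $W_e \neq^* W_i$ then $g\cdot V_e \neq V_i$. The positive strategy uses transitivity: whenever a new element appears in $W_e \triangle W_i$ (and the guessed outcome is that $W_e =^* W_i$), we append corresponding fresh points to $V_e$ and $V_i$ related by a predetermined product of witness group elements, so that a single eventual $\gamma$ translates one set onto the other. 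The negative strategy for $(e,i,g)$ waits for enough disagreement between $W_e$ and $W_i$ to be observed, then uses Lemma \ref{lemma: avoiding F} to produce a fresh $o \in O$ outside the finite region already ``committed'' by higher-priority positive strategies, adding $o$ to $V_e$ in such a way that $g\cdot V_e$ cannot match $V_i$.

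The main obstacle I anticipate is managing the interaction between positive and negative requirements. A positive requirement commits to a global group element which necessarily acts nontrivially on all of $O$, and this can interfere with fresh elements chosen by lower-priority negative requirements. Unlike case (iii), where Lemma \ref{equivalence of non-isolation notions} furnishes strong local flexibility (a group element that fixes a prescribed finite set while acting nontrivially elsewhere), in case (ii) we have only the global transitivity of Lemma \ref{lemma: avoiding F}. The construction must therefore carefully segregate the ``workspaces'' of different requirements within $O$, ensuring that fresh witnesses from Lemma \ref{lemma: avoiding F} can always be found outside the finite support of any previously committed $g$. The verification proceeds by a standard true-path analysis, but the delicate step is confirming that for each positive requirement on the true path, the sequence of pointwise translations eventually coheres into a single $\gamma \in G$ realizing $\gamma \cdot V_e = V_i$.
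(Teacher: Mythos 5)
Your overall plan coincides with the paper's: reduce a $\Sigma^0_3$-complete relation to $R^{ce}_G$ by coding into a single infinite orbit, build the sets $V_e$ by a tree-of-strategies priority argument, and use Lemma~\ref{lemma: avoiding F} to find group elements that move a given finite set off itself. You have also correctly identified the crux of the argument. However, two of the steps you describe would fail as stated.

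First, the positive strategy. You propose building $\gamma$ as ``a predetermined product of witness group elements'' and checking that ``the sequence of pointwise translations eventually coheres into a single $\gamma$.'' But there is no completion on a discrete countable group: an unbounded product of group elements has no limit, and nothing forces the incremental choices to stabilize to a fixed $\gamma\in G$. The paper avoids this entirely by having each (re-)initialization of the positive strategy commit once to a single $g\in G$, chosen via Lemma~\ref{lemma: avoiding F} to clear the current finite set of relevant numbers; under the $\infty$ outcome the node simply enumerates $g^{-1}(V_j^s)$ into $V_i$ and $g(V_i^s)$ into $V_j$. Reinitialization discards $g$ and chooses afresh, and on the true path only the final choice matters. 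Second, and more seriously, your plan for reconciling positive and negative requirements — to pick fresh diagonalization witnesses ``outside the finite support of any previously committed $g$'' — does not make sense here: an element of an infinite transitive permutation group generally has no finite support, so there is no region where a committed $g$ is inert. The genuine danger is chains: a negative strategy puts $K$ into $V_e$; a positive node $\beta_0$ with $\beta_0\smallfrown\infty$ on the path propagates $g^{\beta_0}(K)$ into some $V_{i_1}$; $\beta_1\smallfrown\infty$ propagates $g^{\beta_1}g^{\beta_0}(K)$ into $V_{i_2}$; and after finitely many such steps a composite image could land in exactly the set that a higher-priority restraint protects. The paper's mechanism for this is the notion of an $\alpha$-restrained pair, tracking precisely which pairs $(m,\ell)$ are dangerous for each active restraint via chains of active $\infty$-nodes, together with the verification Lemmas~\ref{lem: finite restraints}, \ref{K exists}, and~\ref{InductionPreserveRestraints} that there are only finitely many such pairs and that they are never realized. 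Without an analogue of this bookkeeping (and without the second case of the negative strategy, where restraining $\phi_n(K)$ from $V_j$ is impossible and one instead keeps $K$ out of $V_i$ while showing $\phi_n(K)$ is forced into $V_j$), the proposal does not close the gap it correctly identifies.
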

\begin{proof}
	We will do our coding entirely within a single infinite orbit, which forms an infinite c.e.\ set. Using a computable bijection between this set and $\omega$, we assume with no loss of generality that $G$ acts transitively on $\omega$.
	
	We fix $R$ a $\Sigma^0_3$ equivalence relation defined by $i\mathrel R j$ if and only if $\exists n X(i,j,n)$ for a $\Pi^0_2$ relation $X$. We construct a uniformly c.e.\ sequence of sets $V_i = W_{f(i)}$ for $i\in \omega$ so that $i \mathrel R j$ if and only if $f(i) \mathrel{R^{ce}_G} f(j)$. That is, there is an $n$ so that $X(i,j,n)$ if and only if there is some $g\in G$ so that $V_i = g(V_j)$. 
	
\subsection*{The requirements and their interaction}	For every $i<j\in \omega$ and $n\in \omega$, we have the following requirements:
	
	\begin{itemize}
	\item[$\mathcal{P}_{i,j}^n$]: If $X(i,j,n)$, then make $V_j = g(V_i)$ for some $g\in G$. If $\neg X(i,j,n)$ then ensure that either $\phi_n$ is not a permutation of $\omega$ or $V_j\neq \phi_n(V_i)$.
	\end{itemize}
	
	Each strategy has three outcomes: $\infty<d<w$. Outcome $\infty$ represents $X(i,j,n)$, outcome $d$ represents $\neg X(i,j,n)$ and we succeed in diagonalizing to ensure $V_j\neq \phi_n(V_i)$, and outcome $w$ represents $\neg X(i,j,n)$ and we never get a chance to diagonalize since we are waiting for $\phi_n$ to converge. 
	
	We put these strategies on a tree so that, if $\tau$ is given the requirement $\mathcal{P}_{i,j}^n$, then we place no strategy $\mathcal{P}_{i',j}^k$ or $\mathcal{P}_{j,i'}^k$ below $\tau\smallfrown \infty$. We do this so that for every $\sigma\in \{\infty,d,w\}^\omega$ and $j\in \omega$, either there is precisely one $\tau$ so that $\tau\smallfrown \infty \prec \sigma$ and $\tau$ is a $\mathcal{P}_{i,j}^k$-strategy for some $i<j$ and $k$, or for every $i<j$ and $k$, there is some $\tau$ so that $\tau$ is a $\mathcal{P}_{i,j}^k$-strategy and either $\tau\smallfrown d\prec \sigma$ or $\tau\smallfrown w\prec \sigma$.

	When first visited, a $\mathcal{P}^n_{i,j}$-strategy will choose an element $g\in G$ which it will use in its infinite outcome. Its choice of $g$ must be consistent with the rest of the construction. In particular, if it applies the infinite outcome, it does not want to cause injury to any higher priority requirement's diagonalization. Namely, for the purpose of diagonalizing, a strategy $\mathcal{P}^{n'}_{i',j'}$ will choose a number $K$ and it may put $K$ into $V_{i'}$ and attempt to keep $\phi_{n'}(K)$ out of $V_{j'}$. On the other hand, under the infinite outcome, for the purpose of ensuring $V_j=g(V_i)$, $\mathcal{P}_{i,j}^n$ will act by putting $g^{-1}(V_j)$ into $V_i$ and putting $g(V_i)$ into $V_j$. We must ensure that the cumulative effect of these infinite outcomes will not ruin the $\mathcal{P}_{i',j'}^{n'}$-strategy diagonalization. That is, that they will not put $\phi_{n'}(K)$ into $V_{j'}$.

	To ensure this, we will define the set $F$ of numbers currently relevant to the construction. That includes all those $K$'s which might enter some set for the sake of a diagonalization and also all those numbers $N$ so that $N$ entering any $V_\ell$ might possibly cause $\phi_n(K)$ to enter $V_{j'}$ by the actions of all the currently active strategies. Then, we will rely on Lemma \ref{lemma: avoiding F} to choose an element $g\in G$ so that $g\cdot F \cap F = \emptyset$.

	\begin{defn}
		At all stages $s$ of the construction, for any given node $\alpha$, we define the set of $\alpha$-restrained pairs as follows:
		If $\alpha$ restrains a number $n$ from entering a set $V_j$, then the pair $(n,j)$ is a restrained pair. In addition, we say a pair $(m,k)$ is \emph{$\alpha$-restrained} if there is a sequence of currently active nodes on the tree $\beta_0,\ldots \beta_n$ such that the following hold:
		\begin{itemize}
			\item For each $i$, either $\beta_i\smallfrown \infty \preceq \alpha$ or $\alpha$ is to the left of $\beta_i$ or $\alpha\smallfrown d\preceq \beta_i$ or $\alpha\smallfrown w \preceq \beta_i$.
			\item If $i<j$, then $\beta_i$ taking outcome $\infty$ does not injure $\beta_j$.
			\item If $m$ were in $V_k$ and then each of these $\beta_i$ were to take their infinite outcomes, in order, it would cause $n$ to enter $V_j$.
		\end{itemize}
	\end{defn}

	%		\begin{defn}
	%			At stage $s$, the collection of pairs of concern are defined as follows:
	%			If some requirement has placed a restraint of the form ``$n$ should not enter $V_i$'' for some $i$, then $(n,i)$ is a pair of concern. 
	%			If $m$ is a number so that putting $m$ into $V_j$ could cause $n$ to enter $V_i$ by actions of infinite outcomes of currently active requirements, then $(m,j)$ is also a number of concern.
	%		\end{defn}
	%	\texttt{Formalize -- e.g., these requirements act in turns and can't have been injured first, etc.}

	\subsection*{The strategy to meet $\mathcal{P}^n_{i,j}$ at node $\tau$}  When initialized, let $F$ be the set of numbers mentioned so far in the construction, including every $n$ which is in an $\alpha$-restrained pair for any $\alpha$ and every requirement's parameter $K$ or $\phi_n(K)$ (if already defined). We will show in Lemma \ref{lem: finite restraints} that this is a finite set. Applying Lemma \ref{lemma: avoiding F}, choose a $g$ in $G$ so that $g\cdot F\cap F = \emptyset$.
	
\begin{itemize}
\item When we approximate that $X(i,j,n)$ holds, this strategy will attempt to make $V_j=g(V_i)$. In this case, we take the outcome $\infty$ and the node acts by putting $g^{-1}(V_j^s)$ into $V_i$ and $g(V_i^s)$ into $V_j$; 
\item When we approximate that $\neg X(i,j,n)$ holds, we employ the following strategy towards
	ensuring $V_j\neq \phi_n(V_i)$. We first choose a new number $K$, in particular $K$ is not in any $\alpha$-restrained pair for any currently active $\alpha$. Moreover, after placing this restraint, there should be no $\tau$-restrained pair $(m,l)$ with $m\in V_l$. We prove in Lemma \ref{K exists} that such a $K$ can be chosen. We place the restraint that $K$ should not enter $V_i$, and wait for $\phi_n(K)$ to converge. While we wait, we take outcome $w$.
	
	Once we see $\phi_n(K)$ converge, we check if we can place a restraint keeping $\phi_n(K)$ from entering $V_j$. That is, we check if, once we place this restraint we would have an $\alpha$-restrained pair $(m,l)$ with $m$ already in $V_l$. If so, we do nothing. If not, we place $K$ into $V_i$ and we place the restraint that $\phi_n(K)$ should not enter $V_j$. In this case, we take outcome $d$.
\end{itemize}

	\subsection*{Verification.} The verification is based on the following lemmas.
	
	\begin{lem}\label{lem:InfGOrbitInfOutcome}
		If $\alpha$ is a $\mathcal{P}^n_{i,j}$-strategy on the true path with true outcome $\infty$, then $i \mathrel{E_G} j$.
	\end{lem}
	\begin{proof}
		Let $s$ be a stage when $\alpha$ is last initialized. Then $\alpha$ chooses a group element $g$. Infinitely often, when $\alpha$ is visited, it puts $g^{-1}(V_j^s)$ into $V_i$ and $g(V_i^s)$ into $V_j$. This ensures that $V_j=g(V_i)$ and thus $i\mathrel{E_G} j$.
	\end{proof}

	\begin{lem}\label{lem: finite restraints}
		At each stage $s$, there are only finitely many $\alpha$-restrained pairs.
	\end{lem}
	\begin{proof}
		We note that if $\beta\succeq \gamma\smallfrown \infty$, then we do not have $\gamma$ as a $\mathcal{P}^k_{i,j}$-strategy and $\beta$ as a $\mathcal{P}^l_{i',j}$-strategy or $\mathcal{P}^l_{j,m}$-strategy. Thus, if we put an element into $V_j$ for some $j$, and via some sequence of infinite outcomes, that causes some other element to appear in $V_j$, this must have been due to some injury among the requirements. As there are only finitely many currently active requirements, this can happen only finitely often. Thus, among the finitely many sets $V_i$ currently under consideration, the set of pairs $(m,j)$ which might cause $\alpha$'s restrained number $n$ to enter the set $V_j$ that it is restrained from, is a finite set.
	\end{proof}
	
	\begin{lem}\label{K exists}
		At any stage $s$, active node $\alpha$, and $i\in \omega$, there are only finitely many $K$ so that $\alpha$ placing a restraint against $K$ entering $V_i$ would cause there to be an $\alpha$-restrained pair $(x,m)$ with $x$ already in $V_m$. 
	\end{lem}
	\begin{proof}
		As in Lemma \ref{lem: finite restraints}, each $x$ entering $V_m$ can cause at most finitely many numbers to enter $V_i$ via a sequence of infinite outcomes of currently active nodes. As there are only finitely many numbers at stage $s$ already in $\bigcup_{m\in \omega}V_m$, this makes only finitely many $K$ have the property that a restraint against $K$ entering $V_i$ would cause there to be an $\alpha$-restrained pair $(x,m)$ with $x$ already in $V_m$. 
	\end{proof}
	
	\begin{lem}\label{InductionPreserveRestraints}
		At every stage of the construction, if $\alpha$ is an active node restraining $a$ from entering to $V_b$, then there is no $\alpha$-restrained pair $(m,l)$ so that $m$ is in $V_l$.
		
		Also, if $(m,l)$ is an $\alpha$-restrained pair so that $m=K^\beta$ where 
		$\beta$ is a $\mathcal{P}^c_{l,l'}$-strategy, then $\beta\smallfrown d$ is to the left of $\alpha\smallfrown d$.
		
		%Also, there is no $\alpha$-restrained pair $(m,l)$ so that $m=K^\beta$ where 
		%$\beta$ is a $\mathcal{P}^c_{l,l'}$-strategy so that $\beta\smallfrown d$ is not to the left of $\alpha\smallfrown d$.
	\end{lem}
	\begin{proof}
		
		We prove both claims by simultaneous induction.
		We begin our induction at the moment when $\alpha$ places its restraint. At this point, no $\alpha$-restrained pair $(m,k)$ can have $m$ in $V_k$. If the $\alpha$-restrained pair is $(K,i)$, this is true because $K$ is chosen to be new. If the $\alpha$-restrained pair is $(\phi_n(K),j)$, we ensure this condition before placing the restraint. The second condition is ensured at this moment because when $\alpha$ places its restraint, it takes outcome $w$ or $d$ and thus the only active nodes $\beta$ which currently have a parameter $K^\beta$ must have $\beta\smallfrown d$ to the left of $\alpha\smallfrown d$. 
		
		As moments of the construction go by, we have to check that we preserve the inductive hypotheses. We have four types of actions to consider:
		\begin{enumerate}
			\item New strategies choosing $g\in G$.
			\item Other strategies taking the infinite outcome.
			\item Other strategies taking their outcome $d$.
			\item Other strategies choose a new $K$.
		\end{enumerate}
		
		When a new strategy chooses its $g\in G$, it does so in a way to ensure that it maintains this inductive hypotheses. In particular if $\beta$ is choosing its parameter $g$, then this is the first time $\beta$ is visited since (re)initialization. Thus $\beta$ is the rightmost active node. Thus to violate the inductive hypothesis, $g$ would have to move either an element in one of the $V_j$'s or some $K^\gamma$ to some element which is $\alpha$-restrained. But $g$ is chosen not to do this.
		
		When other strategies take the infinite outcome, either this outcome injures $\alpha$ or this one step was already considered as a possible step before this happened. In particular, if this puts $m$ into $V_l$, then it is because $m'$ was already in $V_{l'}$. Then $(m,l)$ cannot be an $\alpha$-restrained pair, because then $(m',l')$ would have been an $\alpha$-restrained pair contradicting the inductive hypothesis. Thus the first condition is preserved. As there can now be only fewer $\alpha$-restrained pairs and no new parameters $K^\beta$ have been chosen, the second condition is maintained as well.
		
		When other strategies take their outcome $d$, they may put their number $K$ into their set $V_i$. By the second condition of the inductive hypothesis, we have preserved the first condition of the induction hypotheses. As there can now be only fewer $\alpha$-restrained pairs and no new parameters $K^\beta$ have been chosen, the second condition is maintained as well.	
		
		When other strategies choose a new $K$, they do so in order to maintain these inductive hypotheses. In particular, $K$ is chosen to not be in any $\alpha$-restrained pair. This preserves the second condition, and the first condition is unchanged.
	\end{proof}

	\begin{lem}\label{lem:InfGOrbitFiniteOutcome}
		If $\alpha$ is a $\mathcal{P}^n_{i,j}$-strategy on the true path with true outcome $d$ or $w$, then $V_j\neq \phi_n(V_i)$.
	\end{lem}
	\begin{proof}
		
		If the true outcome is $w$, then $\phi_n$ is not total, so we consider the case where the true outcome is $d$. In this case, there are two possibilities to consider. 
		
		In the first possibility, we keep the restraint $K$ shall not enter $V_i$. This is because we see, when visiting $\alpha\smallfrown d$, that if we were to restrain $\phi_n(K)$ from entering $V_j$ we would already have some $\alpha$-restrained pair $(m,l)$ with $m$ in $V_l$. Since all strategies right of $\alpha$ are reinitialized, as are all strategies below $\alpha\smallfrown w$ or $\alpha\smallfrown d$, this means that the strategies whose infinite outcomes are needed to move $m\in V_l$ to $\phi_n(K)\in V_j$ are just those $\beta$ so that $\beta\smallfrown \infty \preceq \alpha$. Since we assume $\alpha$ is on the true path, the outcome $\beta\smallfrown \infty$ will occur infinitely often,  eventually we will see $\phi_n(K)\in V_j$. By the first part of Lemma \ref{InductionPreserveRestraints}, we will never see $K$ enter $V_i$, so we have successfully diagonalized.
		
		In the second possibility, we put $K$ into $V_i$, and we place restraint $\phi_n(K)$ should not enter $V_j$. By the first part of Lemma \ref{InductionPreserveRestraints}, $\phi_n(K)$ never enters $V_j$ and we have successfully diagonalized.
	\end{proof}

	\begin{lem}\label{lem:ItsAReduction}
		For every $i<j$, $i\mathrel R j$ if and only if $f(i) \mathrel{R^{ce}_G} f(j)$. 
	\end{lem}
	\begin{proof}
		We claim by induction that $i\mathrel R j$ if and only if $f(i) \mathrel{R^{ce}_G} f(j)$. We assume the condition for all pairs $(i',j')$ with $i',j'<j$ and consider pairs $(i,j)$ with $i<j$.
	Suppose there is some $i<j$ with $i\mathrel R j$. Then there must be some $i'<j$ so that a $\mathcal{P}_{i',j}^n$-strategy $\tau$ is on the true path with true outcome $\infty$. Then $i'<j$ and $i\mathrel R i'$. By Lemma \ref{lem:InfGOrbitInfOutcome}, the $\tau$-strategy ensures $f(i') \mathrel{R^{ce}_G} f(j)$. By inductive hypothesis, we have $f(i)\mathrel{R^{ce}_G} f(i') \mathrel{R^{ce}_G} f(j)$. Suppose that there is no $i<j$ with $i\mathrel R j$. Then every $\mathcal{P}_{i,j}^n$-strategy with $i<j$ has true outcome $w$ or $d$. Thus Lemma \ref{lem:InfGOrbitFiniteOutcome} ensures that these strategies along the true path ensure that $\phi_n$ is not a bijection between $W_{f(i)}$ and $W_{f(j)}$. Together, these ensure that $f(i)\,\cancel{\mathrel{R^{ce}_G}}\, f(j)$ for each $i<j$.
	\end{proof}
\noindent This concludes the proof that, if   there is an infinite $G$-orbit, then $R^{ce}_G$ is $\Sigma^0_3$-universal.
\end{proof}

\subsection{Case $(iii)$: The actions of $G$ are non-isolated}

We first give a small reduction of our group $G$ which maintains the properties we assume in this case and makes it computable to find indices for $G$-orbits.

\begin{lem}\label{computableorbits}
	If $G$ is a computable group of permutations of $\omega$ so that the actions of $G$ are non-isolated and all $G$-orbits are finite, then there is a computable subgroup $G'$ of $G$ so that the actions of $G'$ are non-isolated and there is a computable function $f$ sending $a\in \omega$ to a canonical index for the $G$-orbit of $a$.
\end{lem}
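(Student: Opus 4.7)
The plan is to construct $G'$ as the subgroup of $G$ generated by a computable sequence $(h_n)_{n\in\omega}$ of non-identity elements of $G$, chosen inductively so that $h_n$ lies in the stabilizer (in $G$) of a finite set $F_n$ that is large enough to ``freeze'' the orbits of the first $n$ points. Once $h_0,\ldots,h_a$ are chosen, adding any later $h_n$ will not enlarge the orbit of $a$ under the generated subgroup, which is what makes that orbit computable.

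More precisely, suppose $h_0,\ldots,h_{n-1}$ have been defined; let $H_n=\langle h_0,\ldots,h_{n-1}\rangle$ and set
\[
F_n=\bigcup_{a<n}H_n\cdot a.
\]
Each $H_n\cdot a$ is contained in the finite $G$-orbit $G\cdot a$ and is therefore finite; hence $F_n$ is finite, and it can be computed from $h_0,\ldots,h_{n-1}$ by iteratively closing $\{0,\ldots,n-1\}$ under $h_0^{\pm 1},\ldots,h_{n-1}^{\pm 1}$ and halting once no new element is produced. The non-isolation hypothesis, applied to the faithful inclusion $G\hookrightarrow S_\infty$ (for which $\ker(\pi)=\{e\}$), yields $\Stab(F_n)\neq\{e\}$, so a computable search through $G$ returns a non-identity element $h_n\in\Stab(F_n)$. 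Set $G'=\bigcup_n H_n$.

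The subgroup $G'$ still has non-isolated actions: given any finite $F\subseteq\omega$, pick $n$ so that $F\subseteq\{0,\ldots,n-1\}\subseteq F_n$; then $h_n\in G'\setminus\{e\}$ fixes $F$ pointwise. The orbits of $G'$ are computable from $a$ because $G'\cdot a=H_{a+1}\cdot a$, which is produced from $h_0,\ldots,h_a$ by the same finite closure procedure. The non-trivial inclusion is shown by induction on $n\ge a+1$: $H_{n+1}\cdot a=H_n\cdot a$, because $a<n$ places $H_n\cdot a$ inside $F_n$, so $h_n$ fixes $H_n\cdot a$ pointwise and adjoining $h_n$ to the generators leaves this orbit unchanged.

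The main subtlety is the apparent circularity: the definition of $F_n$ refers to orbits, while we want orbits to be computable. This is resolved by the finiteness of $G$-orbits, which bounds each $H_n\cdot a$ and guarantees the closure procedure at stage $n$ terminates, together with the freezing invariant, which guarantees $G'\cdot a$ never exceeds $H_{a+1}\cdot a$. Interpreting ``computable subgroup'' here as the group generated by a uniformly computable sequence of elements of $G$, with the inherited action, supplies the level of effectiveness needed for the subsequent arguments.
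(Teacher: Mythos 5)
Your proof is correct and takes essentially the same approach as the paper: build $G'$ by enumerating nontrivial elements of $G$ that stabilize progressively larger finite sets so that orbits of small numbers stop growing, using non-isolation to guarantee such elements exist. The only cosmetic difference is that you require $h_n$ to \emph{pointwise} fix the closure $F_n$ while the paper only requires new generators to preserve the previously frozen orbits \emph{setwise}; your slightly stronger condition streamlines the verification without changing the substance.
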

\begin{proof}
	We construct $G'$ as a c.e.\ subset of $G$ as follows. 
	We act in stages to satisfy requirements: 
	\begin{itemize}
		\item The actions of $G'$ are non-isolated.
		\item The $G'$-orbit of $s$ will not grow after stage $s$.
	\end{itemize}
	
	To satisfy requirements of the second kind, we enumerate the orbit of the first $s$ numbers $a$ at stage $s$. That is, we take the set $G_s$ of elements of $G'$ that we have enumerated by stage $s$. Then, we compute the orbit of $a$ in the finitely generated group generated by $G_s$. Call this set $\mathcal{O}_a$. We then place a restriction that we will, in the future, only consider elements $g$ in $G$ which have the property that $\forall x\in \mathcal{O}_a\,g(x)\in \mathcal{O}_a$. We note that a finite intersection of such subgroups is a subgroup $H_s$ which contains $\text{Stab}(\{0,\ldots n\})$ for some $n$, so $H_s$ has the same properties that the actions of $H_s$ are non-isolated and all orbits are finite.
	
	A requirement of the first kind states that $G'\cap \text{Stab}(\{0,\ldots n\})$ has at least two elements. We find some element other than $1_{G}$ of the restricted group $H_s$ (restricted due to requirements of the second kind) which is in $\text{Stab}(\{0,\ldots, n\})$. We enumerate this element into $G'$. By proceeding as such, we enumerate a subset of $G$ and we let $G'$ be the subgroup generated by these. Thus $G'$ is a c.e.\ subset of $G$, thus itself a computably-presentable group action. Further, since the $G'$-orbit of $n$ does not grow after stage $n$, it follows that it is computable to produce canonical indices for the $G'$-orbits of numbers.
\end{proof}

Now we are ready to handle the case where the actions of $G$ are non-isolated:

\begin{thm}\label{dense actions}
	If the actions of $G$ are non-isolated then $R^{ce}_G$ is $\Sigma^0_3$-universal.
\end{thm}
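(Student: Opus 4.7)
The plan is to adapt the construction of Theorem \ref{Inf orbit} (Case (ii)), replacing Lemma \ref{lemma: avoiding F} (which used Neumann's lemma on an infinite orbit) by Lemma \ref{equivalence of non-isolation notions} (non-isolation). First I would apply Lemma \ref{computableorbits} to assume $G$ is a computable permutation group whose $G$-orbits are uniformly computable and whose actions remain non-isolated; combined with being in Case (iii) (rather than Case (ii)), all $G$-orbits may be taken to be finite. Fix a $\Sigma^0_3$ equivalence relation $R$ with $i \mathrel{R} j \iff \exists n\, X(i,j,n)$ for $X\in\Pi^0_2$ and build uniformly c.e.\ sets $V_i = W_{f(i)}$ so that $i \mathrel{R} j \iff f(i) \mathrel{R^{ce}_G} f(j)$. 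I would reuse the tree of strategies, requirements $\mathcal{P}^n_{i,j}$, outcomes $\infty<d<w$, definition of $\alpha$-restrained pair, and tree-placement rule (no $\mathcal{P}^k_{l,j}$ or $\mathcal{P}^k_{j,l}$ below a $\mathcal{P}^n_{i,j}\smallfrown\infty$) from the proof of Theorem \ref{Inf orbit}.

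The only substantive change is in how a $\mathcal{P}^n_{i,j}$-strategy $\tau$ chooses its group element $g_\tau$ at initialization. Let $F$ be the set of numbers currently relevant to the construction (members of any $\alpha$-restrained pair, and any parameter $K$ or $\phi_n(K)$ of an active node), and let $F^* = \bigcup_{a\in F}\mathcal{O}_a$ be the union of the $G$-orbits of elements of $F$; this is finite and computable. By non-isolation applied to $F^*$ (Lemma \ref{equivalence of non-isolation notions} with $g = 1_G$) there exists $g_\tau \in \Stab(F^*)\setminus\ker(\pi)$, and I would assign it to $\tau$. The infinite-outcome action (copying $g_\tau^{-1}(V_j^s)$ into $V_i$ and $g_\tau(V_i^s)$ into $V_j$), the diagonalization parameter $K$, and the outcomes $d$ and $w$ are identical to Case (ii).

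The main obstacle is the analogue of Lemma \ref{InductionPreserveRestraints}. The argument rests on two observations: (a) every $g_\tau \in G$ permutes $\omega$ within $G$-orbits, so any cascade of infinite outcomes preserves the $G$-orbit of each element, forcing any $\alpha$-restrained pair $(m,l)$ to have $m$ in the $G$-orbit of $\alpha$'s direct restraint target $n\in F_\alpha$, hence $m\in F^*_\alpha$; and (b) each $\beta$ initialized after $\alpha$ computes $F_\beta \supseteq F_\alpha$ and thus $F^*_\beta \supseteq F^*_\alpha$, so $g_\beta$ fixes $F^*_\alpha$ pointwise. With these in hand, the prepending-cascade argument of Lemma \ref{InductionPreserveRestraints} goes through: if $\beta$'s infinite-outcome action created a new $(m,l)$ with $m\in V_l$ that is $\alpha$-restrained, then prepending $\beta$'s $g_\beta$-step to that cascade (a valid prepending because the newly initialized $\beta$ has no currently active strategy strictly below $\beta\smallfrown d$ or $\beta\smallfrown w$) exhibits a pre-existing $\alpha$-restrained violation, contradicting the inductive invariant.

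The remaining verification --- finiteness of restraints (analogue of Lemma \ref{lem: finite restraints}), existence of a suitable fresh $K$ (analogue of Lemma \ref{K exists}), and the outcome and reduction lemmas (analogues of Lemmas \ref{lem:InfGOrbitInfOutcome}, \ref{lem:InfGOrbitFiniteOutcome}, and \ref{lem:ItsAReduction}) --- then transfers essentially verbatim from Case (ii). The non-triviality $g_\tau \notin \ker(\pi)$ guaranteed by non-isolation ensures that the $\infty$-outcome produces a genuine $G$-orbit equivalence $V_j = g_\tau(V_i)$ in the limit witnessed by a nontrivial element, which is all that is needed to verify $f(i)\mathrel{R^{ce}_G} f(j)$ on the positive side, while the diagonalization side is unaffected by the change.
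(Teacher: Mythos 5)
Your proposal keeps the Case (ii) mechanism of actively pumping $g_\tau^{-1}(V_j^s)$ into $V_i$ and $g_\tau(V_i^s)$ into $V_j$ on the $\infty$-outcome, and replaces the escape condition $g\cdot F\cap F=\emptyset$ (Neumann) by $g\in\Stab(F^*)\smallsetminus\ker(\pi)$ (non-isolation). This substitution does not do the same job, and the analogue of Lemma~\ref{InductionPreserveRestraints} fails at exactly the case the Case (ii) proof labels ``new strategies choosing $g\in G$.'' In Case (ii), when a node $\beta=\mathcal{P}^k_{p,q}$ initializes and adds itself to the pool of nodes that can contribute a cascade step, the choice $g_\beta\cdot F\cap F=\emptyset$ guarantees that $\beta$'s potential step $(m,p)\mapsto(g_\beta(m),q)$ maps any currently relevant $m$ (an element of some $V_l$ or a restraint target, hence an element of $F$) to a number \emph{outside} $F$, so it cannot extend any cascade leading to a higher-priority restraint. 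With $g_\beta\in\Stab(F^*)$, the step instead maps $m$ to \emph{itself}: $(m,p)\mapsto(m,q)$. So if a higher-priority $\alpha=\mathcal{P}^{n'}_{i',j'}$ has already restrained $y=\phi_{n'}(K^\alpha)$ from $V_{j'}$, and $y$ happens to lie in some $V_p$ (which is entirely possible, since $y$ is not chosen by us), then the moment a node $\beta=\mathcal{P}^k_{p,j'}$ below $\alpha\smallfrown d$ or $\alpha\smallfrown w$, or to the right of $\alpha$, is initialized, the pair $(y,p)$ becomes $\alpha$-restrained and the invariant ``no $\alpha$-restrained pair $(m,l)$ has $m\in V_l$'' is immediately violated; when $\beta$ later takes its $\infty$-outcome it pumps $g_\beta(y)=y$ into $V_{j'}$ and destroys $\alpha$'s diagonalization. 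The tree-placement rule only excludes such $\beta$ below $\alpha\smallfrown\infty$, not below $\alpha\smallfrown d$ or $\alpha\smallfrown w$, and your prepending-cascade argument only addresses the moment $\beta$ \emph{acts}, not the moment it \emph{initializes}, which is where the new restrained pairs appear.

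This is not an incidental slip: with all orbits finite one genuinely cannot find a $g$ that moves the relevant finite set off itself within orbits (consider an action where every orbit has size $2$ and $F$ contains whole orbits), so the Case (ii) pumping mechanism cannot be rescued by a cleverer choice of $g_\tau$. The paper's proof therefore changes the mechanism entirely rather than just the choice of $g_\tau$: each diagonalizer $\tau$ picks $K$ in a completely \emph{fresh finite orbit}, restrains the whole orbit, and on outcome $d$ places exactly one element of that orbit into \emph{every} $V_\ell$, chosen consistently with a shared ``consistency set'' $S_\tau$ of quadruples that pins down what any potential map $V_k\to V_\ell$ must do on numbers already placed; the $\infty$-outcome performs \emph{no} pumping at all (lower-priority nodes respect $g^\tau$ when they distribute their own orbit, and nodes that are injured dump their entire orbit into every $V_\ell$ as cleanup). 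Because nothing is ever copied between sets, restraint preservation reduces to the observation that every future number entering any set lies in a fresh orbit disjoint from the restrained element's orbit, which is Lemma~\ref{GdenseRestraintsMaintained}. Non-isolation enters not through $\Stab(F^*)\smallsetminus\ker(\pi)$ but through the existence of two $S_\tau$-consistent maps $g_0,g_1$ from $V_i$ to $V_j$ with $g_0(K)\neq g_1(K)$, which gives the strategy room to diagonalize against $\phi_n$. So while your high-level plan (Lemma~\ref{computableorbits}, same tree, same requirements) matches the paper's, the central construction does not, and the version you describe has a real gap.
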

\begin{proof}
	We may assume that each $G$-orbit is finite, as otherwise the result follows from Theorem \ref{Inf orbit}. We fix $R$ a $\Sigma^0_3$ equivalence relation given by $\exists n X(i,j,n)$ where $X$ is a $\Pi^0_2$ relation. Applying Lemma \ref{computableorbits}, we have a computable subgroup $G'$ of $G$ with uniformly computable orbits. In the proof below, we will construct a sequence of sets $V_i=W_{f(i)}$ and ensure that if $i R j$ then $f(i)\mathrel{E^{ce}_{G'}} f(j)$ and if $i \cancel R j$ then $V_i\neq \phi_n(V_j)$ for every $n$. Thus we may replace $G$ by $G'$ and simply assume that $G$ has uniformly computable finite orbits.
	
	We again have strategies $\mathcal{P}^n_{i,j}$ placed on a tree with outcomes $\{\infty<d<w\}$.
	
	\begin{itemize}
	\item[$\mathcal{P}_{i,j}^n$]: If $X(i,j,n)$, then make $V_j = g(V_i)$ for some $g\in G$. If $\neg X(i,j,n)$, then ensure that $V_j\neq \phi_n(V_i)$.
	\end{itemize}
		
	A $\mathcal{P}_{i,j}^n$-strategy $\alpha$, after taking the $d$-outcome, will specify a computable set $S_\alpha$ of quadruples $(a,b,k,\ell)$. 
	This will be formed by taking the union of the $S_{\beta}$ for higher priority strategies which have last take their $d$-outcomes and possibly adding some new quadruples.
		A quadruple $(a,b,k,\ell)$ in a set $S_\alpha$ is understood as saying that the possible actions sending $V_k$ to $V_\ell$ must send $a$ to $b$. A potential map $g$ from $V_x$ to $V_y$ is said to be \emph{consistent with $S_\alpha$} if for all quadruples $(a,b,x,y)\in S_\alpha$, we have $g(a)=b$.
		
		A $\mathcal{P}_{i,j}^n$-strategy will take outcome $w$ when first visited. Whenever the approximation to $X$ says that $X(i,j,n)$ holds, it will take outcome $\infty$. Otherwise, it takes either outcome $w$ or $d$, depending on whether a certain computation converges.
		
\subsection*{The strategy to meet a $P_{i,j}^n$-strategy at node $\tau$} When first visited, $\tau$ sets $S_\tau$ to be the union of $S_\beta$ for higher priority $\beta$. Then it chooses some $g^\tau \in G$ so that $g$ is a potential map from $V_i$ to $V_j$ which is consistent with this $S_\tau$. 
The strategy at $\tau$ then chooses a new number $K$ and a pair of group elements $g_0,g_1$ each potential maps from $V_i$ to $V_j$ which are consistent with $S_\tau$ so that $g_0(K)\neq g_1(K)$. In choosing $K$ to be new, we mean that $K$ is chosen to be in a different $G$-orbit than any number previously mentioned in the construction.
	Then $\tau$ restrains any element from the orbit of $K$ from entering any set $V_\ell$.
	The strategy $\tau$ will continue to take outcome $w$ until either the appoximation to $X$ says that $X(i,j,n)$, in which case it takes outcome $\infty$ or 
	we see $\phi_n(K)\downarrow$, in which case it will take outcome $d$. As long as it takes outcome $w$, it takes no further action.
	
	When taking the outcome $d$ for the first time since last taking outcome $w$, $\tau$ checks if $\phi_n(K)$ is in $V_{j,s}$. If so, it does nothing and maintains its restraint against $K$ entering $V_i$. If, on the other hand $\phi_n(K)$ is not in $V_{j,s}$, then it puts $K$ into $V_i$ and restrains $\phi_n(K)$ from entering $V_j$.
	For every single set $V_\ell$, $\tau$ then puts exactly one member of the $G$-orbit of $K$ into $V_\ell$. This is done inductively as follows: 
	\begin{enumerate}
		\item We put $g_0(K)$ into $V_j$ unless $\phi_n(K)=g_0(K)$, in which case we put $g_1(K)$ into $V_j$.
		\item For each $k$, if there is a $\beta$ so $\beta\smallfrown \infty \preceq \tau$ and $\beta$ is an $\mathcal{P}^m_{i',k}$-requirement, then the number which we put in $V_k$ is the $g^\beta$-image of the number that we put into $V_{i'}$. 
		\item For each $k$, if there is no $\beta$ as such, then we choose any $h_k\in G$ a potential map from $V_i$ to $V_k$ which is consistent with $S_\tau$ and we put $h_k(K)$ into $V_k$.
	\end{enumerate}
	Finally, we increase $S_\tau$ so that for every pair $(k,\ell)$, we put $(a,b,k,\ell)$ into $S_\tau$ where $a$ is the number we have put into $V_k$ and $b$ is the number we put into $V_\ell$.
	
	When taking the infinite outcome or if $\tau$ is injured, $\tau$ immediately places the entirety of the $G$-orbit of $K$ into every set $V_{\ell}$. If it has no parameter $K$ chosen yet, then there is no clean-up to do here, and it does nothing. It also reverts its $S_\tau$ to being the union of the $S_\beta$ for higher priority $\beta$.
		Note that $\tau$ does \emph{not} perform any action on taking outcome $\infty$ to ensure that $g^\tau(V_i)=V_j$. In lieu of this, every strategy right of $\tau$ cleans up after themselves whenever they are injured, and strategies under the outcome $\tau\smallfrown \infty$ respect $\tau$'s choice of $g^\tau$ when they put numbers into $V_j$.

	%	When taking the $f$-outcome (i.e., we believe that $\neg X(i,j,n)$) and when last visited we took the $\infty$-outcome or this is the first stage taking any outcome, we wait to find a $g$-cycle of length larger than $2L$. Let this be an element $a$ with $g$-order $N>2L$. We place $a$ into $V_i$, and we place either $g^{n^s_{i,j}}(a)$ or $g^{L+ n^s_{i,j}}(a)$ into $V_j$. In the first case, we have $n^{s+1}_{i,j} = n^s_{i,j}$ and in the latter case, we make $n^{s+1}_{i,j}=L+n^s_{i,j}$. We choose it so that $n\not\equiv n^{s+1}_{i,j} \mod N$.
	%	For every other set $V_k$, if $\tau$ is not below any $\sigma\smallfrown\infty$ for $\sigma$ a $P_{i',k}^{n'}$-strategy, then we place $g^{n^s_{i,k}}(a)$ into $V_k$. Otherwise, we see which number $b$ enters $V_{i'}$, and then place $g^{n_{i',k}^s}(a)$ into $V_k$.
	%	
	%	If we take the $\infty$-outcome, then we perform the clean-up phase.
	%	
	%	The clean-up action for $\tau$: For the number $a$ which we placed into $V_i$ when we took the $f$-outcome, we place all of the $g$-orbit of $a$ into every set $V_k$. Further, for every number $a$ which any node $\rho>_L \tau \smallfrown \infty$ placed into any $V_k$, we place all of the $g$-orbit of $a$ into each $V_k$.

	\subsection*{Verification.} The verification is based on the following lemmas.
	
	\begin{lem}
		The choice of $S_\alpha$ is consistent and coherent. That is, for every pair $k,\ell$, there is an element $g\in G$ which is a potential map from $V_k$ to $V_\ell$ which is consistent with $S_\alpha$.
		Similarly, if $\delta\smallfrown \infty \preceq \alpha$, then $g^\delta$ is consistent with $S_\alpha$.
	\end{lem}
	\begin{proof}
		We prove this by induction on stages.
		
		As every $S_\beta$ contains all the $S_\gamma$ for $\gamma$ higher priority, when $S_\alpha$ is first defined, it is set to equal some $S_\beta$ already defined, so it is consistent by inductive hypothesis. Similarly, if $\delta\smallfrown \infty \preceq \alpha$, then also $\delta\smallfrown \infty \preceq \beta$ or $\beta=\delta$, thus the second condition is also maintained when $S_\alpha$ is first defined. Thus, we only need to check that when $S_\alpha$ grows due to taking action in the $d$ outcome, the inductive hypotheses are maintained.
		
		Let $\alpha$ be a $\mathcal{P}^n_{i,j}$-strategy.
		It is immediate that coherence is maintained for the pair $i,j$, witnessed by either $g_0$ or $g_1$.
		It is immediate that coherence is maintained for all pairs $i,k$ where $k$ is in case (3), since we chose an element $h_k\in G$ which was consistent with $S_\alpha$ to decide which element of the $G$-orbit of $K$ to put into $V_k$. For all $k$ in case (2), the second inductive hypothesis shows coherence for the pair $i,k$. Namely, we have two maps $g^\beta$ and $h_k$ each consistent with $S_\alpha$, and their composition is also consistent with $S_\alpha$. Finally, composing two maps between $V_i$ and $V_k$ and $V_i$ and $V_\ell$, we see that $S_\alpha$ is coherent for every pair $k,\ell$.
		
		Finally, we chose our elements to enter $S_\alpha$ so as to be consistent with $g^\beta$ for all $\beta$ with $\beta\smallfrown \infty \preceq \alpha$ in (2).
	\end{proof}
	
	This implies that when a node $\alpha$  is first visited, it can choose its parameter $g^\alpha$.
	
	\begin{lem}\label{GdenseRestraintsMaintained}
		If a node $\alpha$ places a restraint against a number $n$ entering the set $V_i$, then either $\alpha$ is injured, lifts the restraint, or $n$ does not enter $V_i$.
	\end{lem}
	\begin{proof}
		When $\alpha$ places a restraint, it is either in outcome $w$ or $d$. Note that the restraint is lifted if it ever enters outcome $\infty$. So, the only strategies which can act, supposing that $\alpha$ is not injured and the restraint is not lifted are those to the right of $\alpha\smallfrown \infty$, which are all currently reinitialized, or nodes $\beta$ so that $\beta\smallfrown \infty \preceq \alpha$. In any case, numbers only enter sets due to clean-up or diagonalization for elements $K$ chosen after this restraint is placed. In particular, those elements are disjoint from the $G$-orbit of the restrained element, so in neither the diagonalization nor the clean-up can they cause the restrained number to enter any set.
	\end{proof}

	\begin{lem}
		Suppose $\tau$ is on the true path and is a $\mathcal{P}^n_{i,j}$-strategy. If $X(i,j,n)$, then there is an element $g\in G$ so that $g(V_i) = V_j$. If $\neg X(i,j,n)$ then $\phi_n(V_i)\neq V_j$.
	\end{lem}
	\begin{proof}
		We first consider the case where $X(i,j,n)$ holds.
		
		Consider the first stage $s_0$ at which $\tau$ is visited after its last initialization. Then it chooses an element $g^\tau \in G$. By choice of $g^\tau$ as being consistent with $S_\tau$, it is consistent with all elements which have already entered $V_i$ and $V_j$. That is, if a higher priority strategy $\beta$ has placed $x$ into $V_i$ and $y$ into $V_j$ in the same orbit, then it put the quadruple $(x,y,i,j)$ into its set $S_\beta$, and so we have $g^\tau(x)=y$.
		
		No node to the left of $\tau$ ever acts again. The cumulative future effect of nodes right of $\tau\smallfrown \infty$ or above $\tau\smallfrown \infty$ are that they place entire $G$-orbits into $V_i$ and $V_j$. 
		Finally, we have to consider which elements might enter $V_i$ and $V_j$ due to strategies below $\tau\smallfrown \infty$. These place some number $x$ into $V_i$, then, via the second bullet, they place $g^\tau(x)$ into $V_j$. Thus they also agree with $g^\tau$. Thus $V_j=g^\tau(V_i)$.
		
		Next, we consider the case where $\neg X(i,j,n)$. Consider the stage when $\tau$ takes outcome $w$ after its last initialization and after its last time taking outcome $\infty$. Then it chooses a number $K$ and places restraint that no element in the $G$-orbit of $K$ enter any set. If $\phi_n(K)$ diverges, then the requirement is satisfied, so we may assume it converges. There are two cases, and Lemma \ref{GdenseRestraintsMaintained} shows that $\phi_n(K)\in V_{j}$ if and only if $K\notin V_i$ in either case.
		%		
		%		
		%		Note that $g$ can be chosen because $f_{\sigma}$ is consistent with an element of $G$ for every $\sigma$, and $f_{<\tau}=f_\sigma$ for some $\sigma$. If it takes the infinite outcome infinitely often, then it ensures that $g(V_i) = V_j$. If the true outcome is $w$, then $\phi_n$ is not total and so $\phi_n(V_i)\neq V_j$. Suppose that the true outcome is $d$. After finitely many times of moving $K$ because it was seen to be in a restrained orbit, eventually the final choice of $K$ is made.
		%		Then there are two cases. Either $\phi_n(K)$ was not in $V_{j,s}$ and $\tau$ put $K$ into $V_i$. Due to the restraint that $\phi_n(K)$ does not enter $V_{j}$, Lemma \ref{GdenseRestraintsMaintained} shows that $\phi_n(V_i)\neq V_j$. In the other case, we maintained restraint that $K$ not enter $V_i$, so Lemma \ref{GdenseRestraintsMaintained} shows that $K\notin V_i$, but $\phi_n(K)\in V_j$ so we have $\phi_n(V_i)\neq V_j$.	
	\end{proof}
	
As above, let $f$ be so that $V_i=W_{f(i)}$ for all $i$. Then, the fact that $f$ is a reduction follows by induction as in Lemma \ref{lem:ItsAReduction}.
%	\begin{lem}
%		For every $i<j$, $i \mathrel{R} j$ if and only if $f(i) \mathrel{E_G} f(j)$.
%	\end{lem}	
%\noindent	This concludes the proof that, if $G$ has non-isolated actions, then
% $E^{ce}_G$ is $\Sigma^0_3$ complete.  
\end{proof}

\subsection{Concluding}

Recall that, by Lemma \ref{trichotomy}, the three cases considered are exhaustive and, by Lemma \ref{normal form c.e. orbit eqrels}, each c.e.\ orbit equivalent relation is of the form $R^{ce}_G$. Hence, putting  Theorems \ref{G finite}, \ref{Inf orbit}, and \ref{dense actions} together, we finally obtain the desired dichotomy:

\begin{mainthm}\label{characterization of ceoer}
Every c.e.\ orbit equivalence relation is computably bireducible with exactly one of $=^{ce}$ or $E_0^{ce}$.
%
%Up to computable reducibility, every c.e.\ orbit equivalence relation is either equivalent to $=^{ce}$ or $E_0^{ce}$.
\end{mainthm}
%\begin{mainthm}
%	For any computable group $G$ of computable permutations, $E^{ce}_G$ is either  $\Sigma^0_3$ complete (thus, equivalent to $E^{ce}_0$) or equivalent to $\Ece$.
%\end{mainthm}
%\begin{proof}
%	It follows from Lemma \ref{trichotomy} that the three cases considered in Theorems \ref{G finite}, \ref{Inf orbit}, and \ref{dense actions} are exhaustive. Thus either $E_G$ is equivalent to $\Ece$ or is $\Sigma^0_3$-complete by Theorems \ref{G finite}, \ref{Inf orbit}, \ref{dense actions}.
%\end{proof}

\section{Equivalence Relations  \eiti}

We now give examples of degrees which contain equivalence relations which are \eiti. We concentrate on the interval between $\Ece$ and $\EZce$.  

%First, note that every equivalence relation which is \eiti is $\Sigma^0_3$, thus is reducible to $\EZce$, so to show an equivalence relation is in this interval we need only show it's  above $\Ece$.
Contrasting with our dichotomy theorem for the case of a \ceoer, we show that there are both infinite chains and infinite antichains of equivalence relations which are \eiti. This gives several strong answers to \cite[Question 3.5]{CHM} showing that there is no analogue of the Glimm-Effros dichotomy for equivalence relations on the c.e.\ sets.

% The following observation is easy and \cite[Theorem 4.3]{CHM} notes that $E_{\min}$ is properly below $\Ece$. Define $i\, E_{\min} \, j \Leftrightarrow (\min W_i =\min W_j).$
%
%
%\begin{obs}
%	$E_{\min}$ is enumerable in indices. 
%\end{obs}

\begin{thm}\label{infinite chains of eqrels enumerable in indices}
	There is an infinite chain of equivalence relations which are \eiti between $\Ece$ and $\EZce$. 
\end{thm}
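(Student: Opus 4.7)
The plan is to build, uniformly in $n\geq 1$, a sequence $E_1 \subsetneq E_2 \subsetneq \cdots$ of equivalence relations on $\omega$ that are all \eiti, all strictly between $\Ece$ and $\EZce$, and satisfy $E_n <_c E_{n+1}$. The inclusion $E_n\subseteq E_{n+1}$ will immediately yield $E_n\leq_c E_{n+1}$ via the identity, so the substantive task is to secure the strict non-reducibility $E_{n+1}\not\leq_c E_n$.

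The structural skeleton I would use is as follows. Fix a uniformly computable partition of $\omega$ into pairwise disjoint infinite computable blocks $B_1, B_2,\ldots$ and set $A_n = B_1\cup\cdots\cup B_n$. Each $E_n$ will be built so that $i \mathrel{E_n} j$ implies $W_i\triangle W_j$ is finite and contained in $A_n$. This automatically delivers $E_n \subseteq E_{n+1} \subseteq \EZce$, and it gives a natural \eiti enumeration $\alpha_n$: for each pair $(F^+, F^-)$ of finite subsets of $A_n$, uniformly compute an index for $(W_i \cup F^+) \setminus F^-$ and output it precisely when the construction has declared the corresponding pair to be in $E_n$. Choosing suitable witness pairs $W_i=\emptyset$ and $W_j=\{b\}$ with $b\in B_{n+1}$ makes $E_n \subsetneq \EZce$ trivially, and the priority construction below will provide witnesses making $\Ece \subsetneq E_n$.

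To secure $E_{n+1}\not\leq_c E_n$, I would run a tree-of-strategies priority construction analogous in spirit to the one in the proof of Theorem~\ref{Inf orbit}. For each pair $(e,n)$ we place the requirement
\[
\mathcal{R}_{e,n}\colon\ \phi_e \text{ is not a reduction from } E_{n+1} \text{ to } E_n.
\]
A strategy for $\mathcal{R}_{e,n}$ reserves fresh Recursion-Theorem-controlled indices $i,j$ together with a candidate modification template $(F^+,F^-)$ whose support lies in $B_{n+1}$ (hence outside $A_n$), so that enacting the template would place $(i,j)$ into $E_{n+1}$ but not into $E_n$. With outcomes $\infty<d<w$, the strategy then watches $\phi_e$ on $i,j$: if $\phi_e(i)\mathrel{E_n}\phi_e(j)$ is eventually observed we refrain from enacting the template, keeping $(i,j)\notin E_{n+1}$, so $\phi_e$ equates $E_{n+1}$-inequivalent things; otherwise we enact the template to place $(i,j)\in E_{n+1}$ and restrain their images from being $E_n$-collapsed by subsequent actions, so $\phi_e$ separates $E_{n+1}$-equivalent things.

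The main obstacle will be orchestrating the interaction among strategies across different levels and along the tree, to guarantee simultaneously that (a) transitivity of each $E_n$ is maintained when new equivalences are declared and closed under composition, (b) the subsumptions $E_n\subseteq E_{n+1}\subseteq \EZce$ are not accidentally violated by cascading effects of enacted templates, and (c) the enumeration $\alpha_n$ remains uniformly computable as equivalences are added during the construction. Confining each $\mathcal{R}_{e,n}$-strategy's template to its own block $B_{n+1}$ gives the room needed to diagonalize without disturbing lower levels, playing a role analogous to Lemma~\ref{lemma: avoiding F} in Section~3, and the restraint-and-injury analysis should go through by arguments patterned on Lemmas~\ref{restraintsaremaintainedhere}--\ref{InductionPreserveRestraints}.
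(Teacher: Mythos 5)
Your proposal diverges substantially from the paper's argument, and it contains a genuine conceptual error at its foundation.

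\textbf{The error.} You write that ``the inclusion $E_n\subseteq E_{n+1}$ will immediately yield $E_n\leq_c E_{n+1}$ via the identity.'' This is false: a computable reduction requires $i \mathrel{E_n} j \Leftrightarrow i \mathrel{E_{n+1}} j$, and inclusion of the relations only gives the forward implication. (The identity relation on $\omega$ is a subset of the one-class relation, but certainly does not reduce to it.) So you have not, in fact, disposed of the ``easy'' direction $E_n\leq_c E_{n+1}$; it requires an explicit non-identity reduction. The paper constructs one: $R_n\leq_c R_{n+1}$ via the map $k\mapsto l$ with $W_l=\{(n+1)x+y : nx+y\in W_k,\ 0\leq y<x\}$. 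Related to this, the claim that the constraint ``$i\mathrel{E_n}j$ implies $W_i\triangle W_j$ finite and $\subseteq A_n$'' \emph{automatically} delivers $E_n\subseteq E_{n+1}$ is also unjustified: the constraint is necessary for such an inclusion but not sufficient, and you would need to coordinate the constructions (or build all $E_n$ as filters of a single declared set and then verify transitivity of each filtered relation, which is not automatic either). Finally, ``$\Ece\subsetneq E_n$'' does not give $\Ece<_c E_n$; you would need further requirements to prevent a reduction of $E_n$ to $\Ece$.

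\textbf{Comparison with the paper.} Beyond the error above, your route is considerably heavier machinery than is needed. The paper avoids any priority construction for this theorem. It defines, for each ceer $X$ with all infinite classes, an explicit relation $R_X$ (roughly: $W_i=W_j$, or else $0\in W_i\cap W_j$ and the least non-elements of $W_i,W_j$ are $X$-equivalent), proves $R_X$ is \eiti by a direct enumeration, and then instantiates $X=\Id_n$. The chain $\Ece\leq_c R_1\leq_c R_2\leq_c\cdots$ is exhibited by explicit computable maps, and the strict non-reducibility $R_{n+1}\not\leq_c R_n$ is proved by a clean invariant: $R_{n+1}$ has exactly $n+1$ equivalence classes that are properly $\Sigma^0_2$ while $R_n$ has only $n$, and a computable reduction must carry properly-$\Sigma^0_2$ classes injectively to properly-$\Sigma^0_2$ classes. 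This counting argument replaces your entire diagonalization. Your priority-construction sketch could perhaps be made to work, but the key step you defer --- ensuring that $(\phi_e(i),\phi_e(j))$ stays out of $E_n$ when you put $(i,j)$ into $E_{n+1}$, given that you do not control $W_{\phi_e(i)},W_{\phi_e(j)}$ and these sets might simply become equal (forcing $E_n$-equivalence through $\Ece\subseteq E_n$) --- is exactly the $\Pi^0_2$-guessing interaction that requires the bulk of the work, and you acknowledge but do not resolve it.
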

\begin{proof}
	For any number $i$, let $F(i)$ be the least number in $\omega\smallsetminus W_i$ and undefined if $W_i=\omega$.
	For each ceer $X$, let $R_X$ be the equivalence relation defined as follows:
\[
	i \mathrel{R_X} j \Leftrightarrow W_i=W_j \mbox{ or } (0\in W_i\cap W_j \mbox{ and } F(i)\mathrel{X} F(j)).
	\]
	
	\begin{lem}\label{TheseAreEnumerableInIndices}
		Let $X$ be  a ceer where every class is infinite. Then the relation $R_X$ is \eiti.
	\end{lem}
	\begin{proof}
		Until we see $0$ enter $W_i$, we only enumerate $i$ into the collection of indices equivalent to $i$. Once we see $0$ enter $W_i$, we take our approximation $y$ to $F(i)$ and for every $x\in [y]_X$, we enumerate all c.e.\ sets $(Y\cup [0,x-1])\smallsetminus \{x\}$ into the family. If at a later stage we see $y$ enter $W_i$, we take our new approximation $y'$ to $F(i)$ and for every old set we choose an $x'>x$ so $x' \mathrel{X} y'$ and $x'$ hasn't already been enumerated into the set and we move to enumerating $(Y\cup [0,x'-1])\smallsetminus \{x'\}$. In addition, we add a new column for each set 
		$(Y\cup [0,z-1])\smallsetminus \{z\}$ for any c.e.\ $Y$ and $z\mathrel{X} y'$.
		If we change our approximation to $F(x)$ infinitely often, each sets in our  enumerated family is $\omega$, which is exactly $W_i$. Otherwise, each set settles on something equivalent to $W_i$. Similarly, the last time we add columns, we add representatives of every c.e.\ set equivalent to $W_i$. 
	\end{proof}

	We apply the above lemma to the case of the ceer $\Id_n$ given by equivalence modulo $n$. For $n\geq 1$, let $R_n$ be the equivalence relation given as $R_{\Id_n}$. It follows that each $R_n$ is \eiti. 
	The map $k\mapsto l$ where $W_l=\{x+1 : x\in W_k\}$ reduces $\Ece$ to $R_n$ for each $n$ as these sets do not contain $0$. It suffices to show that $R_n<_c R_{n+1}$ for each $n$. The map $k\mapsto l$ where $W_l = \{(n+1)x+y : nx+y\in W_k \text{ with } 0\leq y<x\}$ gives a reduction of $R_n$ to $R_{n+1}$. Finally, observe that $R_{n+1}$ has precisely $n+1$ classes which are properly $\Sigma^0_2$. Namely, these are the classes of sets which contain $0$ but are not total. But $R_n$ only contains $n$ such classes, so it follows that $R_{n+1}\not\leq_c R_{n}$.
\end{proof}

\begin{thm}\label{infinite antichain of eqrels enumerable in indices}
	There is an infinite antichain of equivalence relations \eiti between $\Ece$ and $\EZce$.
\end{thm}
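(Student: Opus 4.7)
The plan is to follow the template of Theorem \ref{infinite chains of eqrels enumerable in indices}. For each $e\in\omega$ we will construct a ceer $X_e$ with all classes infinite, and define $E_e := R_{X_e}$ exactly as in the proof of that theorem. The conditions placing $E_e$ between $\Ece$ and $\EZce$ then come for free: Lemma \ref{TheseAreEnumerableInIndices} shows that $E_e$ is \eiti; the map $k\mapsto l$ with $W_l=\{x+1:x\in W_k\}$ reduces $\Ece$ to $E_e$ (since these sets avoid $0$); and $R_{X_e}$ is $\Sigma^0_3$, so $E_e\le_c\EZce$ by Theorem \ref{e0 is sigma3 complete}.

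The substance of the theorem is pairwise incomparability. The key structural observation is that the uniformly computable function $g$ with $W_{g(x)}=\omega\smallsetminus\{x\}$ satisfies $g(x)\,E_e\,g(x')$ iff $x\,X_e\,x'$, so each ceer $X_e$ canonically embeds into $E_e$ via $g$. In particular, if $\phi_n$ were a reduction from $E_e$ to $E_{e'}$, then $\phi_n\circ g$ would implicitly reduce $X_e$-equivalence to $E_{e'}$-equivalence. For every $e\ne e'$ and every $n\in\omega$ we thus impose
\[
\mathcal{N}_{e,e',n}:\ \phi_n\text{ is not a reduction from }E_e\text{ to }E_{e'}.
\]

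A strategy for $\mathcal{N}_{e,e',n}$ picks two fresh witnesses $a,b$, waits for $\phi_n(g(a))$ and $\phi_n(g(b))$ to converge, and then inspects the c.e.\ sets $W_{\phi_n(g(a))}$ and $W_{\phi_n(g(b))}$. The diagonalization splits into cases according to whether these sets fail to contain $0$, equal $\omega$, or are incomplete sets containing $0$ with $F$-values $y_a,y_b$. In each case, the strategy either collapses $a,b$ in $X_e$ or keeps them distinct so that $g(a)\,E_e\,g(b)$ disagrees with $\phi_n(g(a))\,E_{e'}\,\phi_n(g(b))$. The main obstacle is that the target ceer $X_{e'}$ is itself being built, so strategies must guess on a tree of outcomes whether the current approximation to $y_a\,X_{e'}\,y_b$ will persist, with the correct guess lying on the true path. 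A background module continually adds fresh elements to each $X_e$-class, ensuring every class is infinite in the end.

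Verification is a standard true-path argument: along the true path every $\mathcal{N}_{e,e',n}$ is met, every $X_e$ is a ceer with all classes infinite, and hence $\{E_e\}_{e\in\omega}$ is an antichain of equivalence relations \eiti sitting strictly between $\Ece$ and $\EZce$. The hardest technical point is coordinating the simultaneous construction of all the $X_e$'s under the priority tree while guaranteeing infiniteness of every $X_e$-class, since a diagonalization decision inside $X_e$ can force constraints on witnesses used by other strategies for pairs involving $X_{e'}$.
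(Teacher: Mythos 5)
Your overall plan is right in spirit (build ceers $X_e$, package them into equivalence relations, diagonalize against reductions on a priority tree), and the observation that $g(x)\mapsto$ an index for $\omega\smallsetminus\{x\}$ embeds $X_e$ into $E_e$ is correct. But there is a genuine gap: you reuse the relation $R_X$ from the chain proof verbatim, whereas the paper's proof deliberately \emph{changes} the encoding to a relation $A_n$ with a ``column'' structure, and that change is doing the real work. In the paper's $A_n$, each c.e.\ set declares (via its unique even element $2k$) which column $\omega^{[k]}$ of $X_n$ it codes into, and the pairing $\langle F(i),k\rangle$ keeps the columns disjoint; all sets with $\geq 2$ evens are dumped into a single $\Sigma^0_1$ class. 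This yields the key lemma (proved with the Recursion Theorem) that any reduction $f$ of $A_n$ to $A_m$ induces a \emph{computable} map $g$ sending Proper $k$-Coding indices to Proper $g(k)$-Coding indices. That computable column correspondence is exactly what lets each requirement $\mathcal{R}^k_{n,m}$ claim a fresh column $\omega^{[k]}$ of $X_n$, compute which column $\omega^{[g(k)]}$ of $X_m$ it must look at, and then act there with only finite-injury interference.

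In your sketch, there is no analogue of that lemma, and this is precisely the issue you flag at the end (``coordinating the simultaneous construction of all the $X_e$'s''). With the unmodified $R_X$, the $F$-values $y_a=F(\phi_n(g(a)))$ and $y_b=F(\phi_n(g(b)))$ that your strategy must control inside $X_{e'}$ land anywhere in $\omega$, are only limit-computable, drift upward as $W_{\phi_n(g(a))}$ grows, and can collide with witnesses and restraints of other requirements acting on the same $X_{e'}$; there is no disjoint ``territory'' you can claim, and no Recursion-Theorem argument available (you fix $W_{g(a)}=\omega\smallsetminus\{a\}$ rather than building a controllable index), so the complexity dichotomy does not get promoted to a computable correspondence. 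Your sketch also leaves the case split (target is singleton / total / coding) at the level of ``inspect the sets,'' but these are $\Pi^0_2$/$\Sigma^0_2$ distinctions that change over time, and a wrong early commitment (collapsing $a,b$ in $X_e$) cannot be undone in a ceer. In short, the missing idea is the modified definition of the equivalence relations themselves --- the $k$-coding indices and the computable-column lemma --- not just a more careful priority argument on top of $R_X$.
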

\begin{proof}
	We define the function $F(i)$ to be the least number $k$ so that $2k+1$ is not in $W_i$.
We will construct a sequence $(X_n)_{n\in \omega}$ of ceers with infinite classes. Then we define a  sequence $A_n$ of equivalence relations as follows:
	\begin{multline*}
		i \mathrel{A_n} j \Leftrightarrow W_i=W_j \text{ or (both $|W_i\cap \text{Evens}|$ and $|W_j\cap \text{Evens}|$ are $\geq 2$)} \\ 
		\text{ or (both $W_i$ and $W_j$ contained only 1 even number $2k$ and $\langle F(i),k\rangle \mathrel {X_n} \langle F(j),k\rangle$)} ).
	\end{multline*}
	
	As in Lemma \ref{TheseAreEnumerableInIndices}, each $A_n$ is \eiti. 
	In particular, whenever we believe $W_i$ contains no even numbers, we only enumerate the index $i$ as being equivalent to $i$. When we believe that $W_i$ contains exactly 1 even number, we use the enumeration from  Lemma \ref{TheseAreEnumerableInIndices}, and as soon as we see that $W_i$ contains at least two even numbers, we just enumerate indices for all c.e.\ sets containing at least two even numbers.
	
	We note that there are four kinds of indices to consider: 
	\begin{itemize}
		\item \emph{Oddish indices}: These are $i$ so that $W_i\subseteq \text{Odds}$. In this case, $[i]_{A_n} =[i]_{\Ece}$, so $[i]_{A_n}$ is properly $\Pi^0_2$. Using these indices and a function $k\mapsto \ell$ where $W_\ell = \{2x+1 : x\in W_k\}$ is a reduction of $\Ece$ to each $A_n$.
		\item \emph{Proper $k$-Coding indices}: These are $i$ so that $W_i$ contains only one even number $2k$ and $\text{Odds}\not\subseteq W_i$. Then $[i]_{A_n} = \{j: j \text{ is also a $k$-coding index and } \langle F(i),k\rangle \mathrel {X_n} \langle F(j),k\rangle)\}$. In this case $[i]_{A_n}$ is properly $\Sigma^0_2$.
		\item \emph{Full $k$-Coding indices}: These are $i$ so that $W_i$ contains only one even number $2k$ and $\text{Odds}\subseteq W_i$. In this case, $[i]_{A_n}$ is properly $\Pi^0_2$.
		\item \emph{Big indices}: These are $i$ so that $W_i$ contains $\geq 2$ even numbers. In this case $[i]_{A_n}$ is $\Sigma^0_1$.
	\end{itemize}
	
	The key idea of this argument is the following lemma:
	
	%	\begin{lem}
	%		Suppose that $f$ is a reduction of $A_n$ to $A_m$ and that $W_i$ and $W_j$ both contain exactly 1 even number $2k$. Then $W_{f(i)}$ and $W_{f(j)}$ also contain only 1 even number, and it is the same even number.
	%	\end{lem}
	%	\begin{proof}
	%		Note that the class of $i$ is properly $\Sigma^0_2$ (if $F(i)$ is defined) or is properly $\Pi^0_2$ if $F(i)$ is undefined (i.e., $\text{Odds}\subseteq W_i$).
	%		In either case, it follows that the image classes cannot be the class of sets containing $\geq 2$ even numbers (which is $\Sigma^0_1$). 	
	%	\end{proof}
	
	\begin{lem}
		Suppose that $f$ is a reduction of $A_n$ to $A_m$. Then there is a computable function $g$ so that for every $k$, if $i$ is a Proper $k$-Coding index, then $f(i)$ is a Proper $g(k)$-coding index. 
	\end{lem}
	\begin{proof}
		Since the only classes which are properly $\Sigma^0_2$ are those of proper $l$-coding indices, we see that if $i$ is a Proper $k$-Coding index, then $f(i)$ is a Proper $l$-coding index for some $l$. We need only show that $l$ depends only on $k$.
		
		Suppose towards a contradiction that $i$ and $j$ are both Proper $k$-Coding indices yet $f(i)$ is a Proper $l$-Coding index and $f(j)$ is a Proper $l'$-Coding index with $l'\neq l$. By the Recursion Theorem, we build a c.e.\ set with index $e$ as follows: We put $2k$ into $W_e$ and we wait to see $2m$ enter $W_{f(e)}$ for some $m$. If we later see $2\ell$ enter $W_{f(e)}$, we extend $W_e$ to equal $W_j$ and end the construction. On the other hand, if we see some $2\ell'$ enter $W_{f(e)}$, we extend $W_e$ to equal $W_{i}$ and end the construction.
		
		If we were to never see any $2m$ enter $W_{f(e)}$, then we have a Proper $k$-Coding index which is not sent to a Proper $l$-Coding index for any $l$, which we have already observed is impossible. In either of the other cases, we have ensured that $e\mathrel A_n i$ if and only if $f(e)\cancel{\mathrel{A_m}} f(i)$ contradicting $f$ being a reduction.
	\end{proof}
	
	We now build the ceers $X_n$ so the resulting $A_n$ satisfy the requirements:
	
\begin{itemize}
\item[$\mathcal{R}^k_{n,m}$]: $\phi_n$ is not a reduction of $A_n$ to $A_m$.
\end{itemize}
	
	We fix a ceer $E$ which has infinitely many infinite classes. We begin the construction with every $X_n$ being equal to $\oplus_{i\in \omega} E$.
	
\subsection*{The strategy to meet one $\mathcal{R}^k_{n,m}$-requirement} Suppose that the $p$th requirement in terms of priority is $\mathcal{R}^k_{n,m}$. Choose a column $\omega^{[k]}$ which has never been mentioned in the construction (in particular $X_{n,s}\restriction \omega^{[k]}=E$). We make $X_{n,s+1}\restriction \omega^{[k]}=\Id_{p+2}$, and we restrain lower priority requirements from further collapsing $X_n\restriction \omega^{[k]}$. We choose a number $K$ which is a proper $k$-coding index (i.e., we choose an index for the set $\{2k\}$). We wait to see $g(k)$ converge. That is, we wait to see $2l$ enter $W_{f(K)}$ for some $l$. Once we see this $l$, there are two cases: If $X_{m}\restriction \omega^{[l]}$ is restrained by a higher priority requirement, then $X_{m}\restriction \omega^{[l]}$ already has fewer than $p+2$ classes. Otherwise, it is not restrained and we can make 
% We need do nothing since each of the $p+2$ non-equivalent proper $k$-coding indices in $A_n$ are sent to proper $l$-coding indices in $A_m$, which cannot be injective by the pigeon-hole principle. On the other hand, if $X_{m}\restriction \omega^{[l]}$ is not restrained by a higher priority requirement, then we just make
  $X_{m}\restriction \omega^{[l]}=\Id_1$. In either case, we have satisfied the requirement since, were $\phi_n$ to be a reduction of $A_n$ to $A_m$, then each of the $p+2$ non-equivalent proper $k$-coding indices in $A_n$ would be sent to proper $l$-coding indices in $A_m$, but this cannot be injective on classes by the pigeon-hole principle (we use $p+2$ to ensure that $p+2>1$ even when $p=0$).
	In any case, whenever we act, we injure all lower priority requirements. These strategies fit together as a standard finite-injury construction.
\end{proof}

Thus unlike our dichotomy theorem in the case of a \ceoer, there is a far richer collection of degrees of equivalence relations which are \eiti, and thus there is no dichotomy theorem for equivalence relations on c.e.\ sets.

\end{document}